\def\div{\mathop{\rm div}}
\def\eps{\varepsilon}
\def\dd{\,{\rm d}}
\def\div{{\rm div}}
\def\RR{\mathbb{R}}
\def\rhom{\rho_{\min}}
\def\rhoM{\rho_{\max}}
\newtheorem{theorem}{Theorem}
\newtheorem{lemma}[theorem]{Lemma}
\newtheorem{assumption}[theorem]{Assumption}
\newtheorem{remark}[theorem]{Remark}
\begin{document}
\title{Identification of Chemotaxis Models with Volume-Filling}
\author[H. Egger]{Herbert Egger$^\dag$}
\author[J.-F. Pietschmann]{Jan-Frederik Pietschmann$^\dag$}
\author[M. Schlottbom]{Matthias Schlottbom$^\dag$}
\thanks{$^\dag$Numerical Analysis and Scientific Computing, Department of Mathematics, TU Darmstadt, Dolivostr. 15, 64293 Darmstadt. \\}
% Email: {\tt $\{$egger,pietschmann,schlottbom$\}$@mathematik.tu-darmstadt.de}}

\begin{abstract}
Chemotaxis refers to the directed movement of cells in response to a chemical signal 
called chemoattractant. A crucial point in the mathematical modeling of chemotactic 
processes is the correct description of the chemotactic sensitivity and of the
production rate of the chemoattractant. In this paper, we investigate the
identification of these non-linear parameter functions in a chemotaxis model with volume-filling.
We also discuss the numerical realization of Tikhonov regularization for the stable solution of the inverse problem. Our theoretical findings are supported by numerical tests.
\end{abstract}

\maketitle

\section{Introduction}

We consider the identification of the parameter functions $f=f(\rho)$ and $g=g(\rho)$ in the 
coupled non-linear parabolic-elliptic system
\begin{align}
 \partial_t \rho &= \div (\nabla \rho - f(\rho)\nabla c)\qquad\text{in } \Omega\times (0,T), \label{eq:rho_our}\\
  -\Delta c + c &=  g(\rho) \quad\text{in } \Omega \times (0,T),\label{eq:c_our}
\end{align}
which is complemented by initial and boundary conditions
\begin{align}
 \rho(0,x) &= \rho_0(x)\quad\text{in}\;\Omega,\label{eq:ic}\\
 \partial_n\rho - f(\rho)\partial_n c  = 0 \quad & \text{and}\quad \partial_n c = 0\,\quad \text{on}\,\partial\Omega\times (0,T).\label{eq:bc} 
\end{align}
The system \eqref{eq:rho_our}--\eqref{eq:c_our} is a non-linear variant of the famous Patlak-Keller-Segel model of chemotaxis 
which describes the motion of bacteria in response to a chemical signal. In this context, $\rho(x,t)$ denotes the bacteria density,
$c(x,t)$ is the concentration of the chemoattractant, $f(\rho(x,t))$ is the chemotactic sensitivity, and $g(\rho(x,t))$ is the production rate of the chemoattractant. 
The boundary conditions in \eqref{eq:bc} describe that there is no flux of bacteria or of the chemoattractant over the boundary $\partial\Omega$, as it is the case in a closed vessel like a petri dish; see \cite{Perthame2007} for further details.

The original model of chemotaxis introduced by Patlak \cite{Patlak1953} and by Keller and Segel \cite{Keller1970,Keller1971} is given by \eqref{eq:rho_our} and a parabolic counterpart of \eqref{eq:c_our} with parameter functions $g(\rho)=\rho$ and $f(\rho)=\chi \rho$ and constant $\chi$. 
For this classical model, solutions can develop blow-up in finite time \cite{Jaeger1992}.
Since blow-up does not appear in biological applications, non-linear variants of the model have been introduced \cite{Burger2006,DiFrancesco2008,Painter2003}. In these models, the chemotactic sensitivity $f(\rho)$ and the production rate $g(\rho)$ are described as non-linear functions of the bacteria density, in particular, $f$ is designed to degenerate at a given maximal density which is referred to as volume-filling, \cite{Wrzosek2010b}. Then global existence of solutions can be established \cite{BurgerDolak-StrussSchmeiser2008}. We will present such a global solvability result below. 
For a review on models and analytical results, let us also refer to \cite{Hillen2009,Horstmann2003,Horstmann2004,Perthame2007}. 

The functions $f$ and $g$ required in the non-linear models of chemotaxis are typically chosen by physical reasoning. The validity of these choices can be tested  by observation of the evolution of the bacteria density $\rho$ in typical petri dish experiments. 
In this paper, we study from an analytical and a numerical point of view the following two important practical questions: \\[-2ex]
% \begin{itemize}
% \item[(i)] Is it possible to uniquely determine $f$ from measurements of $\rho$?
% \item[(ii)] Is it possible to uniquely determine $g$ from measurements of $\rho$?
% \end{itemize}

(i) Is it possible to uniquely determine $f$ from measurements of $\rho$? \\[-2ex]

(ii) Is it possible to uniquely determine $g$ from measurements of $\rho$? \\[1ex]
We will give affirmative answers to (i) and (ii) in case the other parameter function
is known. 
Note that  $f$ and $g$ only depend on a single variable while measurements 
of $\rho$ will typically be available in space and time. 
The two inverse problems (i) and (ii) are therefore highly overdetermined and one might hope to 
be able to identify both, $f$ and $g$, at the same time. Unfortunately, we cannot give a positive answer to this question yet. For identification results for the parabolic--parabolic case, we refer to Remark \ref{rem:par}.

To the best of our knowledge, only few results on inverse problems in chemotaxis are available to date. 
In \cite{Fister2008} the case $f=f(\rho,c)=\rho\tilde f(c)$ is considered where the function $\tilde f$ is to be identified. The special structure of the cross-diffusion term $\div(\rho\tilde f(c)\nabla c)$ is an important ingredient for the analysis in \cite{Fister2008}, and we need different techniques to prove uniqueness for the inverse problems (i) and (ii) here.
After establishing the identifiability, we also discuss the possibility to reconstruct the parameters by numerical methods. Using the observation of the density $\rho$, we reformulate problem (i) as a linear inverse problem and we investigate Tikhonov regularization for its stable solution; the identification of $g$ could be done in a similar manner. A related approach has been utilized for the identification of hydraulic permeability in groundwater flow in \cite{KaltenbacherSchoeberl02}. The viability of our approach will be demonstrated in numerical experiments.
One could alternatively also formulate Tikhonov regularization for (i) as an
optimization problem constrained by the non-linear pde system
\eqref{eq:rho_our}--\eqref{eq:c_our}. Related optimal control
problems for chemotaxis have been considered in \cite{Feldthordt2012,Fister2003}. 
\\

The outline of the manuscript is as follows: 
In Section~\ref{sec:prelim}, we introduce some basic assumptions and notations that are used throughout the text. 
We prove the existence and uniqueness of solutions to  \eqref{eq:rho_our}--\eqref{eq:bc} in Section~\ref{sec:ex} and also establish regularity and other properties of the solutions that are required for our analysis later on. 
Identifiability of the parameter functions $f$ and $g$ is proven in Section~\ref{sec:inv}. 
The remaining two sections are concerned with the numerical reconstruction of the chemotactic sensitivity $f$. In Section~\ref{sec:reg}, we reformulate the problem as 
a linear inverse problem with perturbed operator, and we discuss its ill-posedness and stable solution by Tikhonov regularization. Section~\ref{sec:num} then presents details of our implementation and numerical tests which support our theoretical results.
We conclude with a few comments on open problems, and, for convenience of the reader, we collect some auxiliary results in a short appendix.

\section{Preliminaries}\label{sec:prelim}

Let $L^p(\Omega)$ denote the Lebesgue spaces of $p$th power integrable
functions with norm $\|u\|_{L^p(\Omega)}=(\int_\Omega |u|^p \dd x)^{1/p}$ for 
$1 \le p < \infty$ and $\|u\|_{L^\infty(\Omega)}=\text{ess}\sup_{x \in \Omega} |u(x)|$. 
The symbol $W^{m,p}(\Omega)$ is used for the Sobolev space of functions in $L^p(\Omega)$ with weak derivatives up 
to order $m$ in $L^p(\Omega)$. The spaces $L^2(\Omega)$ and $W^{1,2}(\Omega)$ are Hilbert spaces and the inner product of $L^2(\Omega)$ is abbreviated by 
\begin{align*}
  (u,v)_\Omega = \int_\Omega u(x) v(x) \dd x%\qquad\text{for } u,v\in L^2(\Omega)
.
\end{align*}
For a Banach space $X$ and $1 \le p \le \infty$, 
we denote by $L^p(0,T;X)$ the Bochner space of functions $u:[0,T]\to X$ with norm
\begin{align*}
  \|u\|_{L^p(0,T;X)}^p = \int_0^T \|u(t)\|_X^p\dd t<\infty.
\end{align*}
For $p=\infty$ the integral is replaced by a essential supremum over $t \in (0,T)$. 
The space $L^2(0,T;L^2(\Omega))$ is again a Hilbert space with inner product
\begin{align*}
  \langle u, v\rangle = \int_0^T (u(t),v(t))_\Omega\dd t%
% \qquad\text{for } u,v\in L^2(0,T;L^2(\Omega))
.
\end{align*}
The following basic assumptions on the domain, the parameters, and the initial condition will be used throughout the text for analyzing the system \eqref{eq:rho_our}--\eqref{eq:bc}.
\begin{assumption}\label{ass:prelim}
(A1) $\Omega \subset \RR^2$ is a bounded domain with $\partial\Omega \in C^{1,1}$.

(A2) $\rho_0\in W^{2-2/p,p}(\Omega)$ for some fixed $2<p<3$, and $0 \le \rho_0 \le 1$ in $\Omega$.

(A3) $f \in W^{1,\infty}(\RR)$ with $f(0) = f(1) = 0$ and $f(\rho) > 0$ for all $\rho\in (0,1)$.

(A4) $g \in W^{1,\infty}(\RR)$ with $g'(\rho) \neq 0$ for a.e.\@ $\rho \in (0,1)$.
\end{assumption}
%
% \begin{itemize}
% \item[(A1)] $\Omega \subset \RR^2$ is a bounded domain with $\partial\Omega \in C^{1,1}$.
% \item[(A2)] $\rho_0\in W^{2-2/p,p}(\Omega)$ for some fixed $2<p<3$, and $0 \le \rho_0 \le 1$ in $\Omega$.
% \item[(A3)] $f \in W^{1,\infty}(\RR)$ with $f(0) = f(1) = 0$ and $f(\rho) > 0$ for all $\rho\in (0,1)$.
% \item[(A4)] $g \in W^{1,\infty}(\RR)$ with $g'(\rho) \neq 0$ for a.e.\@ $\rho \in (0,1)$.
% \end{itemize}
%
Let us shortly discuss these conditions:
We think of a typical petri dish experiment, which motivates our choice of the domain in (A1). 
The box constraints in (A2) can always be satisfied by appropriate scaling. 
The smoothness of $\rho_0$ will be needed below to show regularity of solutions for the system \eqref{eq:rho_our}--\eqref{eq:bc}.
The bound $p>2$ allows us to obtain continuity of $\rho$, and the upper bound $p<3$ is only required to avoid compatibility conditions.
The assumption (A3) ensures that the bacteria density is really sensitive to the concentration of the chemoattractant. The volume-filling condition $f(0)=f(1)=0$ will allow us to establish that any solution of \eqref{eq:rho_our} satisfies $0\leq \rho \leq 1$ for all time. Therefore, the boundedness of $f$ or $g$ is in principle only required on the interval $[0,1]$.
The assumption of monotonicity of the chemotactic production rate $g$ in (A4) ensures that the bacteria always produce (or consume) the chemoattractant. Note that the two equations \eqref{eq:rho_our}--\eqref{eq:c_our} would decouple if $g'\equiv 0$. 

\section{Solvability for the parabolic-elliptic system}\label{sec:ex}

We will now establish existence and regularity of solutions to the parabolic-elliptic system \eqref{eq:rho_our}--\eqref{eq:bc} under weak regularity requirements on the coefficients,
and we will prove uniform a-priori bounds and further properties of the solutions. 
Corresponding results for smooth parameters $f$ and $g$ can be found, e.g., in \cite{Hillen2001}.
% 
%We start with establishing the existence and regularity of solutions.
% \begin{lemma}\label
\begin{theorem}[Existence, uniqueness, regularity]\label{thm:ex_ell} $ $\\
 Let (A1)--(A4) hold. Then for any $T>0$, there exists a unique solution $(\rho,c)$ to \eqref{eq:rho_our}--\eqref{eq:bc} with $\rho \in L^p(0,T;W^{2,p}(\Omega))\cap W^{1,p}(0,T;L^p(\Omega))$ and $c\in L^\infty(0,T;W^{2,p}(\Omega))$. Moreover, there holds
  \begin{align*}
     \|\rho\|_{L^p(0,T;W^{2,p}(\Omega))}+\|\partial_t\rho\|_{L^p(0,T;L^p(\Omega))} + \|c\|_{L^\infty(0,T;W^{2,p}(\Omega))} \leq C \|\rho_0\|_{W^{1,p}(\Omega)},
  \end{align*}
with $C$ depending only on the domain and the bounds for the coefficients. 
Since $p>2$, we also have $\rho,c \in C([0,T] \times \overline\Omega)$ and $\nabla c \in C([0,T]\times \overline\Omega)^2$ by embedding.
% Moreover, there holds
% \begin{align*}
%       0\leq \rho \leq 1 \quad\text{in}\ \Omega \times (0,T).
% \end{align*} 
% In particular $\rho\in C^0(\overline{\Omega}\times [0,T])$.
\end{theorem}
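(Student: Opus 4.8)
The plan is to establish the theorem by a fixed-point argument combined with $L^p$-maximal parabolic regularity, followed by a bootstrap that yields the a-priori bound, the $L^\infty$ bounds $0\le\rho\le1$, and the uniqueness. First I would set up the fixed-point map: given $\bar\rho\in C([0,T]\times\overline\Omega)$ with $0\le\bar\rho\le1$, solve the elliptic problem $-\Delta c+c=g(\bar\rho)$ with homogeneous Neumann data. Since $g\in W^{1,\infty}(\RR)$ and $\bar\rho$ is bounded, $g(\bar\rho)\in L^\infty(0,T;L^p(\Omega))$, so elliptic regularity on a $C^{1,1}$ domain gives $c\in L^\infty(0,T;W^{2,p}(\Omega))$ with norm controlled by $\|g\|_{L^\infty}$; moreover $\nabla c\in L^\infty(0,T;C(\overline\Omega)^2)$ by the embedding $W^{1,p}(\Omega)\hookrightarrow C(\overline\Omega)$ since $p>2$. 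Next, feed this $c$ into the linear parabolic problem $\partial_t\rho-\Delta\rho=-\div(f(\bar\rho)\nabla c)$ with the boundary condition $\partial_n\rho=f(\bar\rho)\partial_n c=0$; because $f(\bar\rho)\nabla c\in L^\infty(0,T;W^{1,p}(\Omega))$ is bounded (using $f\in W^{1,\infty}$ and $\bar\rho,c$ controlled) and $\rho_0\in W^{2-2/p,p}(\Omega)$, maximal parabolic regularity (see e.g.\ the references to \cite{Hillen2001}, or standard $L^p$-theory) yields a unique $\rho\in L^p(0,T;W^{2,p}(\Omega))\cap W^{1,p}(0,T;L^p(\Omega))$, which embeds into $C([0,T]\times\overline\Omega)$ since $p>2$ (in two space dimensions the mixed space-time embedding holds). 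This defines the map $\bar\rho\mapsto\rho$ on the closed convex set $\{\bar\rho\in C([0,T]\times\overline\Omega):0\le\bar\rho\le1\}$.

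Second, I would verify the two properties needed to invoke Schauder's fixed point theorem: that the set is preserved and that the map is compact and continuous. Preservation of $0\le\rho\le1$ is where the volume-filling assumption (A3) enters: testing the $\rho$-equation with $\rho_-:=\min(\rho,0)$ and using $f(\bar\rho)=0$ wherever $\bar\rho\le0$ — more precisely, one uses a truncation argument on $(\rho-1)_+$ and $\rho_-$ together with the fact that $f$ vanishes at $0$ and $1$, so the cross-diffusion flux $f(\bar\rho)\nabla c$ does not feed the truncated part; a Stampacchia-type estimate then forces the truncations to vanish. (One must be slightly careful that it is $f(\bar\rho)$, not $f(\rho)$, appearing — but since at the fixed point $\bar\rho=\rho$ this is consistent, and for the a-priori bound one can either argue directly at the fixed point or iterate.) Compactness follows from the gain of regularity: the image lies in a bounded subset of $L^p(0,T;W^{2,p})\cap W^{1,p}(0,T;L^p)$, which is compactly embedded into $C([0,T]\times\overline\Omega)$ by the Aubin–Lions–Simon lemma. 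Continuity of the map follows from continuous dependence of the elliptic and parabolic solves on the data together with continuity of the Nemytskii operators induced by $f$ and $g$ (which are continuous on bounded sets of $C(\overline\Omega)$ since $f,g$ are Lipschitz). Hence a fixed point exists, giving a solution $(\rho,c)$.

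Third, for the a-priori estimate $\|\rho\|_{L^p(0,T;W^{2,p})}+\|\partial_t\rho\|_{L^p(0,T;L^p)}+\|c\|_{L^\infty(0,T;W^{2,p})}\le C\|\rho_0\|_{W^{1,p}}$, I would run the chain of estimates once more at the solution, now using the established bound $0\le\rho\le1$: from this, $g(\rho)$ is bounded in $L^\infty$ uniformly (indeed by $\|g\|_{L^\infty([0,1])}$), giving the bound on $c$ in $L^\infty(0,T;W^{2,p})$ with a constant depending only on $\Omega$ and the coefficient bounds; then $f(\rho)\nabla c$ is bounded in $L^\infty(0,T;W^{1,p})$, and maximal parabolic regularity gives the bound on $\rho$ in terms of $\|\rho_0\|_{W^{2-2/p,p}}$ plus the data — and since $0\le\rho\le1$ one can absorb or bound lower-order contributions, arriving at a bound of the claimed form (with $\|\rho_0\|_{W^{1,p}}$ on the right, using that trivially $\|\rho_0\|_{W^{2-2/p,p}}$ and the uniform $L^\infty$-bound together control what is needed — here one uses $0\le\rho_0\le1$). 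The final regularity and continuity statements are then immediate from the established function-space memberships and the Sobolev embedding in dimension two.

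Fourth and last, uniqueness: suppose $(\rho_1,c_1)$ and $(\rho_2,c_2)$ are two solutions with the same data. Set $\rho=\rho_1-\rho_2$, $c=c_1-c_2$. The elliptic equation gives $-\Delta c+c=g(\rho_1)-g(\rho_2)$, hence $\|c\|_{W^{2,p}}\lesssim\|g(\rho_1)-g(\rho_2)\|_{L^p}\lesssim\|\rho\|_{L^p}$ using $g\in W^{1,\infty}$, and in particular $\|\nabla c\|_{L^\infty}\lesssim\|\rho\|_{L^p}$. The difference of the $\rho$-equations, tested with $\rho$, yields an estimate of the form $\tfrac12\tfrac{d}{dt}\|\rho\|_{L^2}^2+\|\nabla\rho\|_{L^2}^2\le\int(f(\rho_1)\nabla c_1-f(\rho_2)\nabla c_2)\cdot\nabla\rho$; splitting $f(\rho_1)\nabla c_1-f(\rho_2)\nabla c_2=(f(\rho_1)-f(\rho_2))\nabla c_1+f(\rho_2)\nabla c$ and using the Lipschitz bound on $f$, the $L^\infty$-bounds on $\rho_i,\nabla c_i$, the bound $\|\nabla c\|_{L^\infty}\lesssim\|\rho\|_{L^p}$ (which in two dimensions and after interpolation is controlled by $\|\rho\|_{L^2}^{1-\theta}\|\nabla\rho\|_{L^2}^{\theta}$ or, more simply, one works in $L^2$ throughout and uses Gagliardo–Nirenberg), one absorbs the gradient terms and closes a Grönwall inequality giving $\rho\equiv0$, hence $c\equiv0$. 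The main obstacle I anticipate is the low regularity of the coefficients $f,g$ — only $W^{1,\infty}$, not smooth — so classical results do not directly apply; the resolution is that $W^{1,\infty}$ is exactly enough for the Nemytskii operators to be Lipschitz on $L^\infty$-bounded sets and for the flux $f(\rho)\nabla c$ to lie in $L^\infty(0,T;W^{1,p})$ once $\rho\in C([0,T]\times\overline\Omega)\cap L^p(0,T;W^{1,p})$, so the $L^p$-maximal-regularity machinery still closes; one must just be careful to establish the $L^\infty$-bound $0\le\rho\le1$ before invoking coefficient bounds, which makes the ordering of the steps above essential.
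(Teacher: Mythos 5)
Your overall architecture (fixed point for the linearized system, bootstrap to $W^{2,p}$ regularity, separate uniqueness estimate) is in the spirit of the paper, but your fixed-point setup has a genuine gap at the self-mapping step. You choose the convex set $\{\bar\rho: 0\le\bar\rho\le1\}$ and claim its invariance under the map $\bar\rho\mapsto\rho$ via a truncation argument using $f(0)=f(1)=0$. That argument fails for the \emph{linearized} problem: the flux is $f(\bar\rho)\nabla c$ with $\bar\rho$ the input, so on the set where the \emph{output} $\rho$ exceeds $1$ there is no reason for $f(\bar\rho)$ to vanish, and the key cancellation (bounding the flux by $\|f'\|_{L^\infty}|\rho-1|$ on the truncation set) is unavailable. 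Indeed, for $\bar\rho\equiv 1/2$ the source $-\div(f(1/2)\nabla c)$ can push $\rho$ above $1$; the linear heat equation with this right-hand side obeys no maximum principle compatible with the box constraints. Your parenthetical remark that ``at the fixed point $\bar\rho=\rho$ this is consistent'' does not help, because Schauder requires invariance of the set \emph{before} a fixed point is known. The paper sidesteps this entirely: it runs Banach's fixed point theorem on a norm ball in $L^\infty(0,T;L^2(\Omega))$, using only that $f$ and $g$ are globally bounded on $\RR$ by (A3)--(A4), and proves $0\le\rho\le1$ \emph{afterwards} (Lemma~\ref{lem:invariant}) at the actual solution, where the flux really is $f(\rho)\nabla c$. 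Your proof can be repaired the same way (replace the box-constrained set by a norm ball and drop the invariance claim from the fixed-point step); Banach's theorem then also gives uniqueness directly, making your final Gr\"onwall argument redundant --- though that argument is essentially the paper's contraction estimate and is itself sound.

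A second, smaller gap: you assert $f(\bar\rho)\nabla c\in L^\infty(0,T;W^{1,p}(\Omega))$ for $\bar\rho$ merely continuous, but $\nabla\bigl(f(\bar\rho)\bigr)=f'(\bar\rho)\nabla\bar\rho$ requires $\bar\rho\in W^{1,p}(\Omega)$ in space, which your convex set does not provide. Consequently the map does not land in $L^p(0,T;W^{2,p}(\Omega))\cap W^{1,p}(0,T;L^p(\Omega))$ in a single step, which also undermines the compactness into $C([0,T]\times\overline\Omega)$ that Schauder needs. The paper handles this by a two-stage bootstrap carried out only at the fixed point: divergence-form $L^2$ theory first gives $\rho\in L^2(0,T;W^{1,2}(\Omega))$, and only then is the equation rewritten in non-divergence form $\partial_t\rho-\Delta\rho+f'(\rho)\nabla c\cdot\nabla\rho=-f(\rho)\Delta c$ so that $L^p$ maximal parabolic regularity (Lemma~\ref{lem:ex_lin_par_lp}) applies. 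You need an analogous intermediate step before invoking maximal regularity.
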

\begin{proof}
We first establish local existence of solutions via Banach's fixed point theorem. 
Consider the non-empty and closed set
  \begin{align*}
     \mathcal{M}=\{\rho \in L^\infty(0,T;L^2(\Omega)):\ \|\rho\|_{L^\infty(0,T;L^2(\Omega))}\leq C_\mathcal{M}\}.
  \end{align*}
The constants $C_\mathcal{M}$ and $T>0$ will be specified below.
On $\mathcal{M}$ we define the mapping
  \begin{align*}
    \Phi: \mathcal{M} \to L^\infty(0,T;L^2(\Omega)), \qquad \tilde \rho \mapsto \rho,
  \end{align*}
  where $\rho$ is the weak solution of the linearized system
  \begin{align}\label{eq:rho_lin}
    \partial_t \rho - \Delta \rho &= -\div(f(\tilde \rho)\nabla c) \quad\text{in }\Omega \times (0,T),\\
    - \Delta c + c &= g(\tilde \rho) \quad\text{in }\Omega \times (0,T) \label{eq:c_lin},
 \end{align}
which is complemented by homogeneous Neumann conditions $\partial_n \rho =\partial_n c=0 $ on $\partial\Omega\times(0,T)$ and the initial condition $\rho(x,0)=\rho_0(x)$ for $x\in\Omega$. 
Using assumption (A4) and Lemma~\ref{lem:ex_lin_ell} with $h=g(\tilde \rho)\in L^\infty(0,T;L^\infty(\Omega))$ 
the solutions of \eqref{eq:c_lin} can be shown to be uniformly bounded in $L^\infty(0,T;W^{2,p}(\Omega))$ for all $T>0$. In particular, since $p>2$, we obtain $\|\nabla c\|_{L^\infty(0,T;L^\infty(\Omega))} \le C_c$ for some constant $C_c$ independent of $T$.
By assumption (A3) we further obtain that $h=f(\tilde \rho) \nabla c$ is uniformly bounded in $L^\infty(0,T;L^2(\Omega))$,
and an application of Lemma~\ref{lem:ex_lin_par_l2} yields that the solution of \eqref{eq:rho_lin} is bounded by
\begin{align*}
 \| \rho\|_{L^\infty(0,T;L^2(\Omega))} \leq \| f(\tilde \rho)\nabla c \|_{L^2(0,T;L^2(\Omega))} + \|\rho_0\|_{L^2(\Omega)}\\
  \leq \sqrt{T|\Omega|} \|f\|_{L^\infty(\RR)} \|g\|_{L^\infty(\RR)}+ \|\rho_0\|_{L^{2}(\Omega)}=:C_\mathcal{M}.
\end{align*}
This shows that $\Phi$ is a self-mapping on $\mathcal{M}$ if $C_\mathcal{M}$ is chosen appropriately. 

Now let $\rho_1 = \Phi(\tilde \rho_1)$ and $\rho_2=\Phi(\tilde \rho_2)$ with $\tilde \rho_1$, $\tilde \rho_2\in \mathcal{M}$. 
Then  $\rho_1-\rho_2$ satisfies the coupled linear parabolic-elliptic system
\begin{align*}
 \partial_t (\rho_1 - \rho_2) - \Delta (\rho_1-\rho_2) &= -\div(f(\tilde \rho_1) \nabla c_1 - f(\tilde \rho_2) \nabla c_2) \\
-\Delta (c_1-c_2) + (c_1-c_2) &= g(\tilde \rho_1) - g(\tilde \rho_2)
\end{align*}
with homogeneous initial and boundary conditions. 
From the second equation and Lemma~\ref{lem:ex_lin_ell} with  $h=g(\tilde \rho_1) - g(\tilde \rho_2) = g'(\hat \rho) (\tilde \rho_1-\tilde \rho_2)$, we deduce
\begin{align*}
\|c_1-c_2\|_{L^\infty(0,T;W^{1,2}(\Omega))} \le \|g'\|_{L^\infty(\RR)} \|\tilde \rho_1-\tilde \rho_2\|_{L^\infty(0,T;L^2(\Omega))}. 
\end{align*}
Applying Lemma~\ref{lem:ex_lin_par_l2} with $h=f(\tilde\rho_1)\nabla c_1 - f(\tilde\rho_2)\nabla c_2$,
we obtain the following estimate for the solution of the first equation 
\begin{align*}
 &\|\rho_1-\rho_2\|_{L^\infty(0,T;L^2(\Omega))} \le \|f(\tilde\rho_1)\nabla c_1-f(\tilde\rho_2)\nabla c_2\|_{L^2(0,T;L^2(\Omega))} \\
&\qquad \qquad 
  \le \|(f(\tilde\rho_1)-f(\tilde\rho_2))\nabla c_1\|_{L^2(0,T;L^2(\Omega))} 
    + \|f(\tilde\rho_2)(\nabla c_1-\nabla c_2)\|_{L^2(0,T;L^2(\Omega))}.
\end{align*}
Due to the uniform a-priori bound for $c_1$, the first term can be estimated by
\begin{align*}
\|(f(\tilde\rho_1)-f(\tilde\rho_2))\nabla c_1\|_{L^2(0,T;L^2(\Omega))} 
    &\leq  \sqrt{T} C_c \|f'\|_{L^\infty(\RR)} \|\tilde \rho_1-\tilde\rho_2\|_{L^\infty(0,T;L^2(\Omega))},
\end{align*}
and using the previous estimate for $c_1-c_2$, we obtain the bound
\begin{align*}
   \|f(\tilde\rho_2)(\nabla c_1-\nabla c_2)\|_{L^2(0,T;L^2(\Omega))} 
   &\leq \sqrt{T} \|f\|_{L^\infty(\RR)}   \|g'\|_{L^\infty(\RR)}\|\tilde \rho_1-\tilde\rho_2\|_{L^\infty(0,T;L^2(\Omega))}
 \end{align*}
for the second term. Combining these estimates with the one for $\rho_1-\rho_2$, we get
  \begin{align*}
    \|\rho_1-\rho_2\|_{L^\infty(0,T;L^2(\Omega))} \leq  C' \sqrt{T} \|\tilde \rho_1-\tilde\rho_2\|_{L^\infty(0,T;L^2(\Omega))},
  \end{align*}
  where $C'$ only depends on $\Omega$ and the bounds for the coefficients.
  Choosing $T$ small enough, we conclude that $\Phi$ is a contraction on $\mathcal{M}$.

Hence by Banach's fixed point theorem, there exists a unique $\rho \in \mathcal{M}$ such that $\rho = \Phi(\rho)$. Applying Lemma~\ref{lem:ex_lin_ell} with $h=g(\rho)$ and Lemma~\ref{lem:ex_lin_par_l2} with $h=\div(f(\rho) \nabla c)$, we see that $\rho\in L^2(0,T;W^{1,2}(\Omega))$. 
We can now differentiate the right hand side of \eqref{eq:rho_lin} and rearrange terms to realize that $\rho$ also satisfies
\begin{align*}
\partial_t \rho - \Delta \rho + f'(\rho) \nabla c \cdot \nabla \rho  &= -f(\rho) \Delta c.
\end{align*}
This amounts to problem \eqref{eq:lin_par_lp}--\eqref{eq:lin_par_bc_lp} with
    $b = f'(\rho)\nabla c\in L^\infty(0,T;L^\infty(\Omega))$ and $h=-f(\rho)\Delta c \in L^\infty(0,T;L^p(\Omega))$.
By Lemma~\ref{lem:ex_lin_par_lp}, we can thus conclude that $\rho\in L^p(0,T;W^{2,p}(\Omega))\cap W^{1,p}(0,T;L^p(\Omega))$.
% and by embedding one sees that $\rho$ is continuous on $\overline{\Omega} \times [0,T]$. As a consequence, also $c\in C([0,T];W^{2,p}(\Omega))$ which follows from the continuity of $g$.
%
Due to the uniform boundedness of $g$ and thus of $c$, the existence and regularity result can be made global in time by a standard continuation argument.
\end{proof}

In addition to the a-priori estimates of the previous theorem, we will also require pointwise bounds on the solution $\rho$ for our analysis of the inverse problems.  
The following result strongly relies on the volume-filling property of our model, 
i.e. the condition $f(0)=f(1)=0$ in assumption (A3). 
For smooth functions $f$, a similar statement, but with a different proof, can be found in \cite{Hillen2001}.
\begin{lemma}[Invariant Regions]\label{lem:invariant} $ $\\
  Let (A1)--(A4) hold and let $(\rho,c)$  be a regular solution of the system \eqref{eq:rho_our}--\eqref{eq:bc} with $\rho\in L^p(0,T;W^{2,p}(\Omega))\cap W^{1,p}(0,T;L^p(\Omega))$ and $c\in  L^\infty(0,T;W^{2,p}(\Omega))$. Then
  $$
  0\leq \rho(x,t) \leq 1\qquad \text{for all } (x,t)\in \Omega \times (0,T).
  $$
\end{lemma}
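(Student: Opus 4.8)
The plan is to run a Stampacchia truncation (energy) argument that crucially exploits the volume-filling condition $f(0)=f(1)=0$ from~(A3): it forces the cross-diffusion flux $f(\rho)\nabla c$ to vanish at the two critical levels $\rho=0$ and $\rho=1$, so that these levels act as barriers for the density. Throughout I would use that, by Theorem~\ref{thm:ex_ell} and the embedding $p>2$, one has $\nabla c\in C([0,T]\times\overline\Omega)^2$ and hence $C_c:=\|\nabla c\|_{L^\infty(0,T;L^\infty(\Omega))}<\infty$, that $\rho\in C([0,T]\times\overline\Omega)$, and that the regularity $\rho\in L^p(0,T;W^{2,p}(\Omega))\cap W^{1,p}(0,T;L^p(\Omega))$ embeds (for bounded $\Omega\subset\RR^2$ and finite $T$) into $L^2(0,T;W^{1,2}(\Omega))$ with $\partial_t\rho\in L^2(0,T;L^2(\Omega))$; this is exactly what is needed to test the weak form of~\eqref{eq:rho_our} with truncations of $\rho$ and to apply the chain rule in time.

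For the upper bound I would test~\eqref{eq:rho_our} with $w:=(\rho-1)_+$. Integration by parts together with the no-flux boundary condition~\eqref{eq:bc} annihilates the boundary term, and using $\partial_t\rho\cdot w=\tfrac12\partial_t(w^2)$ and $\nabla w=\mathbf{1}_{\{\rho>1\}}\nabla\rho$ a.e.\ one arrives at
\[
\tfrac12\tfrac{\dd}{\dd t}\|w\|_{L^2(\Omega)}^2+\|\nabla w\|_{L^2(\Omega)}^2=\int_\Omega f(\rho)\,\nabla c\cdot\nabla w\dd x .
\]
On $\{\rho>1\}$ the condition $f(1)=0$ and $f\in W^{1,\infty}(\RR)$ give $|f(\rho)|=|f(\rho)-f(1)|\le\|f'\|_{L^\infty(\RR)}\,(\rho-1)=\|f'\|_{L^\infty(\RR)}\,w$, so by Cauchy--Schwarz and Young's inequality
\[
\Big|\int_\Omega f(\rho)\,\nabla c\cdot\nabla w\dd x\Big|\le\|f'\|_{L^\infty(\RR)}\,C_c\int_\Omega w\,|\nabla w|\dd x\le\|\nabla w\|_{L^2(\Omega)}^2+C\|w\|_{L^2(\Omega)}^2 .
\]
Absorbing the gradient term leaves $\tfrac{\dd}{\dd t}\|w\|_{L^2(\Omega)}^2\le 2C\|w\|_{L^2(\Omega)}^2$, and since $w(\cdot,0)=(\rho_0-1)_+\equiv0$ by~(A2), Grönwall's inequality forces $w\equiv0$, i.e.\ $\rho\le1$ almost everywhere, hence everywhere by continuity of $\rho$.

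The lower bound $\rho\ge0$ is obtained in the same way, testing with $-\rho_-$ where $\rho_-:=\max(-\rho,0)$, using $f(0)=0$ to estimate $|f(\rho)|=|f(\rho)-f(0)|\le\|f'\|_{L^\infty(\RR)}\,\rho_-$ on $\{\rho<0\}$ and $\rho_-(\cdot,0)\equiv0$ by~(A2). The only genuinely delicate point is the truncation calculus: one must verify that $w=(\rho-1)_+$ (resp.\ $\rho_-$) belongs to $L^2(0,T;W^{1,2}(\Omega))$ with $\nabla w=\mathbf{1}_{\{\rho>1\}}\nabla\rho$ and that $t\mapsto\|w(t)\|_{L^2(\Omega)}^2$ is absolutely continuous with derivative $2\int_\Omega\partial_t\rho\,w\dd x$. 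Both are standard consequences of the chain rule for superposition operators on Sobolev and Bochner spaces, given the regularity of $\rho$ supplied by Theorem~\ref{thm:ex_ell}; with that in hand the remainder is a routine energy estimate, and no information about $c$ beyond the uniform bound $C_c$ on $\nabla c$ is used.
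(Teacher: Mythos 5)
Your proof is correct, and it reaches the conclusion by a slightly different technical route than the paper. The paper avoids testing directly with $(\rho-1)_+$: it instead tracks the $L^1$-type quantity $\int_\Omega \eta_\gamma(\rho-1)\dd x$, where $\eta_\gamma$ is a $W^{2,\infty}$ regularization of the positive part, tests the equation with $\eta_\gamma'(\rho-1)$, and shows after Young's inequality that the remaining cross term $\tfrac12\int\eta_\gamma''(\rho-1)f(\rho)^2|\nabla c|^2$ is supported on $\{1\le\rho\le 1+2\gamma\}$ and is $O(\gamma)$ because $f(1)=0$ gives $f(\rho)^2\le \|f'\|_{L^\infty}^2(\rho-1)^2\le \|f'\|_{L^\infty}^2(2\gamma)^2$ there while $\eta_\gamma''=1/(2\gamma)$; letting $\gamma\to 0$ yields $\tfrac{\dd}{\dd t}\int(\rho-1)_+\dd x\le 0$ with no Gr\"onwall step and using only $\|\nabla c\|_{L^\infty(0,T;L^2(\Omega))}$. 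You work with the $L^2$-quantity $\|(\rho-1)_+\|_{L^2(\Omega)}^2$, use the sharper (but available) bound $\|\nabla c\|_{L^\infty(0,T;L^\infty(\Omega))}<\infty$, and close the argument with absorption and Gr\"onwall. The essential mechanism --- $f(1)=0$ plus the Lipschitz bound on $f$ turning the cross-diffusion term into something controlled by the truncation itself --- is identical in both proofs. Your version trades the paper's explicit regularization (and the mildly delicate interchange of $\gamma\to 0$ with $\tfrac{\dd}{\dd t}$) for the standard but nontrivial truncation calculus on Sobolev--Bochner spaces ($\nabla(\rho-1)_+=\mathbf{1}_{\{\rho>1\}}\nabla\rho$ and absolute continuity of $t\mapsto\|(\rho-1)_+\|_{L^2}^2$), which you correctly flag and which is indeed justified here since $\rho\in L^2(0,T;W^{1,2}(\Omega))$ with $\partial_t\rho\in L^2(0,T;L^2(\Omega))$ and $\rho$ is bounded by continuity on $\overline\Omega\times[0,T]$. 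Both arguments are complete; yours is arguably more streamlined, the paper's is more self-contained about the truncation issue.
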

\begin{proof}
 For $\gamma>0$ let us define $\eta_\gamma\in W^{2,\infty}(\RR)$ by
\begin{align*}
 \eta_\gamma(\rho) = \left\{\begin{array}{ll}
                      0, & \rho \le 0,\\
		      \frac{\rho^2}{4\gamma},& 0 < \rho \le 2\gamma,\\
		      \rho-\gamma,&\rho > 2\gamma,
                     \end{array}\right.
                     \quad\text{with }\quad
                      \eta_\gamma''(\rho)= \begin{cases}
                        0,                      & \rho \le 0,\\
                        \frac{1}{2\gamma},      & 0 < \rho \le 2\gamma,\\
                        0,                      &\rho > 2\gamma.
                      \end{cases}
\end{align*}
Note that $\eta_\gamma(\rho)$ is a regularization of the function $\rho_+=\max(\rho,0)$. 
Using equations \eqref{eq:rho_our} and \eqref{eq:bc}, integration-by-parts, and Young's inequality, we obtain
\begin{align}\nonumber
 \frac{d}{dt}\int_\Omega \eta_\gamma(\rho - 1)\dd x &= \int_\Omega \eta_\gamma'(\rho-1)\partial_t \rho\dd x= \int_\Omega \eta_\gamma'(\rho-1) \; \div(\nabla\rho - f(\rho)\nabla c)\dd x\\
&=-\int_\Omega \eta_\gamma''(\rho-1)\left(|\nabla\rho|^2 - f(\rho)\nabla c\cdot\nabla\rho\right)\dd x \nonumber\\
&\le -\frac{1}{2}\int_\Omega \eta_\gamma''(\rho-1)|\nabla\rho|^2\dd x + \frac{1}{2}\int_\Omega \eta_\gamma''(\rho-1) f(\rho)^2 |\nabla c|^2\dd x.\label{eq:pos_est}
\end{align}
We claim now that the last integral vanishes when we let $\gamma\to 0$. To see this, we define $\Omega_\gamma = \{x\in\Omega:\ 1\le\rho(x,t)\le 1+2\gamma\}$ and use $f(1)=0$, to get
\begin{align*}
&\int_{\Omega_\gamma} \eta_\gamma''(\rho-1)(f(\rho)-f(1))^2|\nabla c|^2\dd x \\
&\le \|f'\|_{L^{\infty}(\RR)}^2\int_{\Omega_\gamma} \frac{(\rho-1)^2}{2\gamma} |\nabla c|^2\dd x 
 \le  2\gamma \|f'\|_{L^{\infty}(\RR)}^2 \|\nabla c\|_{L^\infty(0,T;L^2(\Omega))}^2.
\end{align*}
Together with the non-positivity of the first term in \eqref{eq:pos_est}, we conclude that
\begin{align*}
 \frac{d}{dt}\int_\Omega (\rho - 1)_+\dd x = \lim_{\gamma\to 0^+}\frac{d}{dt}\int_\Omega \eta_\gamma(\rho - 1)\dd x \leq 0.
\end{align*}
Using the box constraints in (A2) for the initial density, we thus obtain that
\begin{align*}
 0 \leq \int_\Omega (\rho - 1)_+\dd x \leq \int_\Omega (\rho_0 - 1)_+\dd x\leq 0,
\end{align*}
for every $t \ge 0$. This implies that $(\rho-1)_+ = 0$, i.e. $\rho \le 1$ on $\Omega \times (0,T)$. The other direction $0\le \rho$ follows with the same arguments, by considering $(\rho)_-=(-\rho)_+$ instead of $(\rho-1)_+$ and using $f(0)=0$ instead of $f(1)=0$.
\end{proof}
\section{Uniqueness for the inverse problems}\label{sec:inv}

We are now in a position to address the two identification problems outlined in the introduction: 
Can the observation of the bacteria density $\rho$ on $\Omega \times (0,T)$ be used to uniquely determine either
\begin{enumerate}
 \item[(i)] the chemotactic sensitivity $f$, or
 \item[(ii)] the production rate $g$ of the chemoattractant,
\end{enumerate}
if the other of the two parameter functions is known?
Note that identification is of course only possible on the interval $(\rhom,\rhoM)$ of densities that are attained; 
here $\rhom=\min_{(x,t) \in \overline{\Omega} \times [0,T]} \rho(x,t)$ and $\rhoM=\max_{(x,t) \in \overline{\Omega} \times [0,T]} \rho(x,t)$. 

% The first identification problem is addressed in the next section, the second item in Section~\ref{sec:id_g}.

\subsection{Identification of $f$}\label{sec:id_f}
Denote by $(\rho_1,c_1)$ and $(\rho_2,c_2)$ the solutions of \eqref{eq:rho_our}--\eqref{eq:bc}
with $f$ replaced by $f_1$ and $f_2$, respectively. We then have %the following uniqueness result for problem (i).
\begin{theorem}\label{thm:uniqueness}
  Let (A1), (A2), (A4) hold, and let $f_1,f_2$ satisfy (A3). Then
  \begin{align*}
    \rho_1=\rho_2 \text{ on } \Omega \times (0,T) \quad \text{ implies } \quad f_1 = f_2 \text{ on } (\rhom,\rhoM).
  \end{align*}
\end{theorem}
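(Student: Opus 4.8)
The plan is to exploit the fact that the two solutions share the same density $\rho := \rho_1 = \rho_2$, so the corresponding chemoattractant concentrations $c_1$ and $c_2$ solve the \emph{same} elliptic problem $-\Delta c_i + c_i = g(\rho)$ with the same Neumann data; by uniqueness for \eqref{eq:c_our}--\eqref{eq:bc} (Lemma~\ref{lem:ex_lin_ell}) we get $c_1 = c_2 =: c$ on $\Omega \times (0,T)$. Now subtract the two copies of \eqref{eq:rho_our}. Since the time derivatives and Laplacians of $\rho$ agree, what remains is
\[
  \div\bigl((f_1(\rho) - f_2(\rho))\,\nabla c\bigr) = 0 \quad\text{in } \Omega\times(0,T),
\]
together with the boundary condition $(f_1(\rho) - f_2(\rho))\,\partial_n c = 0$ on $\partial\Omega$ coming from \eqref{eq:bc}. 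Writing $w := f_1 - f_2$, which by (A3) is a function in $W^{1,\infty}(\RR)$, this says that the vector field $w(\rho)\nabla c$ is divergence-free in the weak sense on $\Omega$ for every $t$.

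The next step is to extract pointwise information from this. Test the identity $\div(w(\rho)\nabla c) = 0$ with a function of $c$ itself, say with $c$ or, more flexibly, with $\phi(c)$ for suitable $\phi$; using the Neumann boundary condition the boundary term drops and integration by parts gives $\int_\Omega w(\rho)\,\phi'(c)\,|\nabla c|^2 \dd x = 0$. A cleaner route is to multiply the equation for $c$, namely $-\Delta c + c = g(\rho)$, by $w(\rho)\,c$ and compare; but the essential point I want is a relation between $w(\rho)$ and $\nabla c$ at points where $\nabla c \neq 0$. The cleanest argument: from $\div(w(\rho)\nabla c)=0$ and $-\Delta c = g(\rho) - c$ we expand $\nabla(w(\rho))\cdot\nabla c = -w(\rho)\,\Delta c = w(\rho)(c - g(\rho))$. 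Since $w(\rho) = (f_1-f_2)(\rho)$ and $\rho$ is (by Theorem~\ref{thm:ex_ell}) continuous and nonconstant in general, one uses a level-set / coarea argument in $\rho$: for a.e.\ value $s\in(\rhom,\rhoM)$ the level set $\{\rho = s\}$ has positive $\mathcal{H}^{1}$-measure at some time $t$, and on it $w(\rho)=w(s)$ is constant while the equation forces a constraint that can only hold if $w(s)=0$.

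The main obstacle, and the step that needs the most care, is ruling out the degenerate possibility that $\nabla c \equiv 0$ on a large set while $\rho$ still varies — i.e.\ showing that the measurement of $\rho$ really "sees" $f$ through $c$. This is exactly where assumption (A4), $g'(\rho)\neq 0$ a.e., enters: it guarantees that $g(\rho)$ genuinely varies wherever $\rho$ does, hence $c = c(\rho$-driven source$)$ is non-degenerate, so $\nabla c$ cannot vanish on the relevant part of space-time without $\rho$ also being locally constant there. I would make this precise by arguing at a fixed time $t_0$ (or integrating over $t$): pick $t_0$ and a point $x_0$ where $\rho$ attains a value $s\in(\rhom,\rhoM)$ and is non-stationary; near such a point $g(\rho)$ is non-constant (by (A4)), so $c$ solves $-\Delta c + c = \text{non-constant}$, forcing $\nabla c \not\equiv 0$ in any neighbourhood; then the identity $\nabla(w(\rho))\cdot \nabla c = w(\rho)(c-g(\rho))$, combined with $w(\rho)\partial_n c = 0$ on the boundary and a unique-continuation / ODE-along-gradient-flow-lines argument, yields $w \equiv 0$ on $(\rhom,\rhoM)$. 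The remaining details — choosing the test functions, handling the set where $\nabla\rho = 0$ via the invariant-region bound $0\le\rho\le 1$ and the regularity $\rho\in L^p(0,T;W^{2,p})$, and the measure-theoretic bookkeeping on level sets — are routine given the regularity already established.
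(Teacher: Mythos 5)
Your opening is the same as the paper's: from $\rho_1=\rho_2$ you get $c_1=c_2=:c$, and subtracting the two density equations yields $\div\bigl(w(\rho)\nabla c\bigr)=0$ with $w=f_1-f_2$. You also correctly identify the two ingredients that must finish the job: testing against $c$ to bring in $|\nabla c|^2$, and assumption (A4) to pass from $\nabla c=0$ to $\nabla\rho=0$. But there is a genuine gap at the decisive step. The identity you propose,
\begin{align*}
\int_\Omega w(\rho)\,|\nabla c|^2 \dd x = 0,
\end{align*}
gives no pointwise information because $w$ has no sign: the integrand can change sign and the integral can vanish without $w(\rho)|\nabla c|^2$ vanishing anywhere. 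You seem to sense this ("the essential point I want is a relation between $w(\rho)$ and $\nabla c$ at points where $\nabla c\neq 0$") and then pivot to a coarea/level-set argument and a "unique continuation along gradient flow lines," neither of which is carried out; in particular the claim that the equation "forces a constraint that can only hold if $w(s)=0$" on a level set is asserted, not proved. The paper's fix is a sign-definiteness trick: argue by contradiction, restrict to the open set $U=\{(x,t): w(\rho(x,t))>0\}$, observe that $w(\rho)_+\nabla c$ agrees with $w(\rho)\nabla c$ on $U$ and vanishes outside, so that testing with $c$ over all of $\Omega\times(0,T)$ gives $\iint_U w(\rho)_+|\nabla c|^2\dd(x,t)=0$ with a \emph{nonnegative} integrand, whence $\nabla c=0$ on $U$.

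A second, smaller omission: even granting $\nabla c=0$ (hence, via (A4), $\nabla\rho=0$) on $U$, you still need to reach a contradiction. The paper does this by substituting back into \eqref{eq:rho_our} to get $\partial_t\rho=0$ on $U$ as well, so $\rho$ is constant on each connected component of $U$ and, by continuity, on its closure — contradicting the non-constancy of the data (the constant case being trivial since $(\rhom,\rhoM)$ is then empty). Your sketch stops at "yields $w\equiv 0$" without this step. The pointwise identity $\nabla(w(\rho))\cdot\nabla c = w(\rho)(c-g(\rho))$ you derive is correct but is not needed once the positive-part argument is in place; I would recommend replacing the coarea and unique-continuation machinery by the paper's elementary contradiction argument.
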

\begin{proof}
If $\rho_0$ is constant, then $\rho$ and $c$ are constant for all time and $(\rhom,\rhoM)$ is empty, so nothing has to be shown. 
We therefore assume from now on that $\rho_0$ is not constant and we rewrite equation \eqref{eq:rho_our} as 
  \begin{align}\label{eq:i12}
  -\div(f_i(\rho_i)\nabla c_i) = \partial_t \rho_i - \Delta \rho_i,\quad i=1,2.
  \end{align}
  Since $\rho_1=\rho_2 =: \rho$, equation \eqref{eq:c_our} implies that $c_1=c_2 =: c$. 
  We then subtract the two equations \eqref{eq:i12} for $i=1,2$, to obtain that
  \begin{align}\label{eq:Tf_0}
    -\div((f_1(\rho)-f_2(\rho))\nabla c) = 0\qquad\text{on } \Omega \times (0,T).
  \end{align}
  This is a linear equation in $F=f_1-f_2$ and it remains to show that \eqref{eq:Tf_0} implies $F(\rho)=0$ for all $\rho\in (\rhom,\rhoM)$. 
  We argue by contradiction: 

  Assume that there exists $\bar\rho\in (\rhom,\rhoM)$ with $F(\bar\rho)>0$ and $\bar\rho=\rho(\bar x,\bar t)$ for some $(\bar x,\bar t)\in\Omega \times (0,T)$. 
  Since $F(\rho)_+ =\max\{F(\rho),0\}= F(\rho)$ on the open and nonempty set $U = \{(x,t)\in\Omega \times (0,T):\ F(\rho(x,t))> 0\}$, we infer from \eqref{eq:Tf_0} that
   \begin{align*}
    -\div(F(\rho)_+\nabla c) = 0\qquad\text{in } U.
  \end{align*}
  Without loss of generality, we may assume that $\overline U \; \cap \; (\Omega \times \{0\})$ is not empty; otherwise 
  we can exchange the role of $f_1$ and $f_2$.  Multiplying this equation by the concentration $c$ and integrating over $U$ yields
  \begin{align*}
   0 &= -\iint_{U} \div(F(\rho)_+\nabla c) c\d(x,t) = -\int_0^T\int_{\Omega} \div(F(\rho)_+\nabla c) c\dd x\dd t\\
%    &= -\int_0^T\int_{\Omega} F(\rho)_+ |\nabla c|^2\dd x \dd t + \int_0^T \int_{\partial\Omega} F(\rho)_+ \partial_n c c\dd \sigma \dd t\\
   &= \iint_{U} F(\rho)_+ |\nabla c|^2\dd (x,t).
  \end{align*}
  In the last step we used integration-by-parts and, respectively, the boundary condition $\partial_n c=0$ on $\partial\Omega \times (0,T)$ to eliminate the boundary term.  Since $F(\rho)_+ = F(\rho)>0$ on $U$, we infer that
  \begin{align}\label{eq:null}
    \nabla c = 0\qquad \text{on } U,
  \end{align}
  from which we also conclude that $\Delta c=0$ on $U$. Using this in equation \eqref{eq:c_our} 
  we obtain by differentiation
  \begin{align*}
    0 = \nabla c = g'(\rho)\nabla\rho \qquad \text{on } U,
  \end{align*}
  and from assumption (A4) we deduce that $\nabla \rho = 0$ on $U$. 
  Inserting this in equation \eqref{eq:rho_our}, we also obtain that $\partial_t\rho=0$ on $U$. 
  Thus, $\rho$ is constant on every connected component of $U$, and by continuity also on $\overline U$, which is a contradiction to $\rho_0 \neq const$. Therefore, $F(\rho) = f_1(\rho) - f_2(\rho) = 0$ on $\Omega \times (0,T)$.
%
%   Assume now that $\partial U\cap \Omega \times (0,T) \neq \emptyset$. Then there is $(x,t)\in \partial U\cap \Omega \times (0,T)$ and a sequence $\{x_k,t_k\}\subset U$ such that $(x_k,t_k)\to (x,t)$ as $k\to \infty$. From continuity of $F(\rho)$ we have as $k\to \infty$
%   \begin{align*}
%     0< \text{const}\equiv F(\rho(x_k,t_k)) \to F(\rho(x,t)) = 0,
%   \end{align*}
%   which yields a contradiction. Thus $\partial U\cap \Omega \times (0,T)=\emptyset$, and $U=\Omega \times (0,T)$, i.e. $\rho$ and $c$ are constant on $\Omega \times (0,T)$. Whence, $\rho_0=\bar\rho$ is constant which yields a contradiction.
\end{proof}
\subsection{Identification of $g$}\label{sec:id_g}
Let us now turn to the problem of identifying the chemotactic production rate $g$ when the chemotactic sensitivity $f$ is known. Here we denote by $(\rho_1,c_1)$ and $(\rho_2,c_2)$ the solutions of the system \eqref{eq:rho_our}--\eqref{eq:bc} with $g$ replaced by $g_1$ and $g_2$, respectively. For this case, we have

\begin{theorem}\label{thm:uniquenes_g_time}
  Let (A1)--(A3) hold, and assume that $g_1,g_2$ satisfy (A4). Then 
  \begin{align*}
    \rho_1=\rho_2 \text{ in } \Omega \times (0,T) \quad \text{ implies } \quad g_1 = g_2 + C \text{ on } (\rhom,\rhoM)
  \end{align*}
for some constant $C \in \RR$ that cannot be identified.
\end{theorem}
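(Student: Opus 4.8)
The plan is to mimic the structure of the proof of Theorem~\ref{thm:uniqueness}, exploiting that $\rho_1=\rho_2=:\rho$ forces a relation between $c_1$ and $c_2$ through the already-known diffusion term. Since $f$ is the same in both systems and $\rho_1=\rho_2$, subtracting the two copies of \eqref{eq:rho_our} gives $\div(f(\rho)\nabla(c_1-c_2))=0$ on $\Omega\times(0,T)$, together with $\partial_n(c_1-c_2)=0$ on the boundary from \eqref{eq:bc}. Setting $w=c_1-c_2$, I would first test this equation with $w$ and integrate over $\Omega$; integration by parts and the Neumann condition yield $\int_\Omega f(\rho)|\nabla w|^2\dd x=0$ for a.e.\ $t$. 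By Lemma~\ref{lem:invariant} we have $0\le\rho\le1$, and by (A3) $f(\rho)>0$ precisely where $\rho\in(0,1)$; one must argue that on the (relatively open) set where $0<\rho<1$ we get $\nabla w=0$, and then handle the set where $\rho\in\{0,1\}$ separately, using continuity of $c_i$ from Theorem~\ref{thm:ex_ell}. The cleanest route: $f(\rho)|\nabla w|^2=0$ a.e., so $\nabla w=0$ a.e.\ on $\{0<\rho<1\}$; if $\rho_0$ is nonconstant this set is nonempty and, combined with the continuity of $w$, should force $w$ constant on relevant components.

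Next I would translate the information on $w=c_1-c_2$ into information on $g_1-g_2$. From \eqref{eq:c_our} for each system, subtracting gives $-\Delta w + w = g_1(\rho)-g_2(\rho)=:G(\rho)$ on $\Omega\times(0,T)$. Wherever $\nabla w=0$ on a connected component of (the interior of) $\{0<\rho<1\}$, we get $\Delta w=0$ there, hence $w$ is a spatial constant, say $w=C(t)$, and therefore $G(\rho(x,t))=C(t)$ on that component. Now I would invoke the same mechanism as before to propagate this: differentiating $G(\rho(x,t))=C(t)$ in space gives $G'(\rho)\nabla\rho=0$; but I also have, from the elliptic equation for $c=c_1$ and $\nabla w=0$, that $\nabla c_1=\nabla c_2$, and more importantly I want to show $\rho$ actually takes a whole interval of values so that $G\equiv\mathrm{const}$ there. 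The key point distinguishing this from Theorem~\ref{thm:uniqueness} is that here we do \emph{not} get a contradiction with $\rho_0\neq\mathrm{const}$; instead we legitimately conclude $w(x,t)=C(t)$ and $G(\rho)=C(t)$, and since $G$ depends only on the value $\rho$ while the same value $\bar\rho$ is attained at times giving possibly different $C(t)$, a short argument (e.g.\ the set of attained values is an interval by continuity and connectedness of $\overline\Omega\times[0,T]$, and $G$ is continuous) shows $C(t)$ must in fact be a single constant $C$ on $(\rhom,\rhoM)$, giving $g_1=g_2+C$ there.

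A cleaner alternative for the second half, which I would actually prefer to write: integrate $-\Delta w + w = G(\rho)$ against a test function, or simply note that $w$ satisfies an elliptic equation with the constraint $\nabla w\equiv0$ on $\{0<\rho<1\}$; on any component $V$ of that set, $w\equiv C(t)$ so $G(\rho)\equiv C(t)$ on $V$. Since $\rho:V\to(0,1)$ is continuous and (for nonconstant $\rho_0$) its image is an interval $I_t$, we get $G\equiv C(t)$ on $I_t$. Taking the union over $t\in(0,T)$ of these intervals recovers $(\rhom,\rhoM)$ up to endpoints; overlapping intervals force all the constants $C(t)$ to agree, yielding a single $C$ with $g_1-g_2\equiv C$ on $(\rhom,\rhoM)$. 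Finally I would remark that $C$ genuinely cannot be identified: replacing $g$ by $g+C$ changes $c$ by an additive constant (since $-\Delta(\cdot)+(\cdot)$ sends the constant $C$ to $C$), and this constant shift in $c$ leaves $\nabla c$, hence \eqref{eq:rho_our}, hence $\rho$, unchanged; thus $g$ and $g+C$ produce the identical observation $\rho$.

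The main obstacle is the gap between "$\nabla w=0$ a.e.\ on the measurable set $\{0<\rho<1\}$'' and "$w$ is locally constant with a controllable value''. One must use the regularity $c_i\in L^\infty(0,T;W^{2,p}(\Omega))$ with $p>2$, so $\rho$ and $c_i$ are continuous in space-time, to pass to the open set where $0<\rho<1$ and argue on its connected components; the case where $\rho$ hits the obstacle values $0$ or $1$ on a set of positive measure requires care (there $f(\rho)=0$ so the first identity gives no information on $\nabla w$), but by continuity the observed-density interval $(\rhom,\rhoM)$ is covered by values attained in the open set $\{0<\rho<1\}$ unless $\rho_0$ is constant, which is the excluded trivial case. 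Reconciling the time-dependence of the constant $C(t)$ — ruling out that $G$ could equal different constants on different time slices — is the one genuinely new ingredient compared to the proof of Theorem~\ref{thm:uniqueness}, and I expect it to be the technical heart of the argument.
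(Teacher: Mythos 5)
Your proposal follows essentially the same route as the paper's proof: subtract the two $\rho$-equations to get $\div(f(\rho)\nabla(c_1-c_2))=0$, test with $c_1-c_2$ to conclude $\nabla(c_1-c_2)=0$ on $\{f(\rho)>0\}$, deduce $g_1(\rho)-g_2(\rho)=c_1-c_2=d(t)$ on a connected component of that open set, and then pin down $d$ as a single constant, ending with the observation that adding a constant to $g$ only shifts $c$ and so cannot be detected. The step you correctly flag as the technical heart --- ruling out genuine time-dependence of $d(t)$ --- is handled in the paper exactly by your ``overlapping value-intervals'' idea, made precise via a small ball $B_\delta(\bar x,\bar t)\subset V$ on which $\rho$ stays in an interval already attained at time $\bar t$, a continuation argument in $t$, and a separate (easy) treatment of times at which $\rho(\cdot,\bar t)$ is spatially constant.
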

\begin{proof}
% Step 1:
  We set $\rho := \rho_1=\rho_2$ and subtract equation \eqref{eq:rho_our} for $c_1$ and $c_2$ to obtain
  \begin{align*}
   -\div(f(\rho)\nabla(c_1-c_2)) = 0\qquad \text{on } \Omega \times (0,T).
  \end{align*}
  Multiplying this equation by $c_1-c_2$, integrating over the domain $\Omega$, integrating by parts, and using the boundary conditions \eqref{eq:bc}
  yields
  \begin{align*}
    \int_\Omega f(\rho) |\nabla(c_1-c_2)|^2\dd x = 0 \quad\text{for all } t\in (0,T).
  \end{align*}
  This further implies that
  \begin{align}\label{eq:nabla_c_zero}
    \nabla(c_1-c_2) = 0\quad\text{on } U= \{ (x,t)\in\Omega \times (0,T):\ f(\rho(x,t))>0\}.
  \end{align}
% Step 2: 
  By continuity of $\rho$ and by $f(\tilde \rho)>0$ for all $0<\tilde\rho<1$ due to (A3), there exists an open connected component $V$ of $U$ with 
  $(\rhom,\rhoM) = \{\rho(x,t) : (x,t) \in V\}$. Because of \eqref{eq:c_our} and \eqref{eq:nabla_c_zero} we get 
  \begin{align} \label{eq:dt}
    g_1(\rho(x,t))-g_2(\rho(x,t)) = c_1(x,t) - c_2(x,t) = d(t) \quad\text{for all }(x,t)\in V
  \end{align}
  with some continuous function $d$ depending only on $t$. We will show below, that $d$ is in fact constant on $V$, which by \eqref{eq:dt} and the fact that $\rho$ attains all possible values on $V$ yields the assertion of the theorem.

% Step 3: 
  Let us now show that $d$ is constant on $V$: We denote by $[t_0,t_1]$ the smallest interval such that $V \subset \Omega \times (t_0,t_1)$ and set $V_t = \{x \in \Omega : (x,t) \in V\}$. 
  First assume that $\rho(\cdot,\bar t) \equiv const$ for some $\bar t \in (t_0,t_1)$: Then $V_{\bar t} = \Omega$ and $\rho(\cdot,t) = \rho(\cdot,\bar t) \equiv const$ for all $t \ge \bar t$, and also $d(t) = d(\bar t)$ for all $t \ge \bar t$.   
  Now assume that $\rho(\cdot,\bar t) \not\equiv const$ on $V_{\bar t}$. Then there exists $\bar x \in V_{\bar t}$ and $\eps>0$ such that $\bar \rho=\rho(\bar x, \bar t)$ and $(\bar \rho-\eps,\bar \rho + \eps) \subset \{\rho(x,\bar t) : x \in V_{\bar t}\}$. Since $V$ is open and $\rho$ is continuous, there exists $\delta>0$ such that the ball $B_\delta(\bar x,\bar t) \subset V$ and $\rho(x,t) \in (\bar \rho-\eps,\bar\rho+\eps)$ for all $(x,t) \in B_\delta(\bar x,\bar t)$. From this and \eqref{eq:dt} we conclude that $d(t)=d(\bar t)$ for all $|t-\bar t|<\delta$. Using a continuation argument, we obtain that $d(t)=d(\bar t)$ for all $t \in (t_0,t_1)$, which was to be shown.
  
  It can easily be seen, that a shift of $g(\rho)$ by a constant value just shifts $c$ by a constant value and therefore does not change the density $\rho$. Therefore, $g$ can at most be identified up to constants.
\end{proof}
 
\begin{remark}[parabolic--parabolic case]\label{rem:par} Let us also briefly comment of identifiability for the parabolic--parabolic system given by \eqref{eq:rho_our} and 
$$
\partial_t c - \Delta c + c = g(\rho),
$$ 
instead of \eqref{eq:c_our}. We expect that the proofs for both the unique identifiability of $f$ and $g$ can be adapted to this case. In fact, for Theorem \ref{thm:uniqueness} the only modification is to notice that $\nabla c = 0$ also implies $\partial_t \nabla c = 0$. For Theorem \ref{thm:uniquenes_g_time}, the proof will remain unchanged until the definition of $d(t)$ which will contain an additional additive term stemming from the time derivatives. To give precise statements together with an adapted existence theory and the numerical treatment of the reconstruction of $g$ is work in progress.
\end{remark}

\section{Forward Operator -- Ill-posedness -- Regularization}\label{sec:reg}

In this section we study  in more detail the inverse problem of determining the unknown chemotactic sensitivity $f$ from observation of the bacteria density $\rho(x,t)$ for all $(x,t)\in \Omega \times (0,T)$.
Let us denote by $f^0$ the true chemotactic sensitivity and by $\rho^0$, $c^0$ the corresponding solution of the system \eqref{eq:rho_our}--\eqref{eq:bc}.
In view of the results of Section~\ref{sec:id_f} the data $\rho^0$ contain enough information to identify $f^0$ uniquely
on the interval $[\rho_{min},\rho_{max}]$ of values of the density that is attained in the experiment.
In practice we have to deal with noisy data $\rho^\delta$, for which we assume that 
\begin{align}
  \| \rho^0-\rho^\delta\|_{L^2(0,T;L^2(\Omega))} \leq \delta. \label{eq:noise}
\end{align}
As usual, the noise level $\delta$ is assumed to be known.
%
% Note that this means that $\rho^\delta\in L^2(0,T;L^2(\Omega))$ and thus the data need not to be differentiable. 
Using the observation $\rho^\delta$, we can define a perturbed forward operator
\begin{align*}
  T^\delta: H^1(0,1)\to L^2(0,T;L^2(\Omega)),\ f \mapsto r^\delta
\end{align*}
where $r^\delta \in L^2(0,T;W^{1,2}(\Omega))\cap W^{1,2}(0,T;W^{1,2}(\Omega)')$ is a solution to the system
  \begin{align}
    \partial_t r^\delta - \Delta r^\delta &= -\div( f( \rho^\delta)\nabla c^\delta) \quad\text{in }\Omega \times (0,T),\label{eq:rho_IP}\\
    - \Delta c^\delta + c^\delta &= g( \rho^\delta) \quad\text{in }\Omega \times (0,T),\label{eq:c_IP}
 \end{align}
complemented by homogeneous Neumann conditions on $\partial \Omega \times (0,T)$ and the initial condition $r^\delta(0)=\rho_0$ in $\Omega$. In view of Lemmas~\ref{lem:ex_lin_par_l2} and \ref{lem:ex_lin_ell}, 
the mapping $T^\delta$ is well-defined.
The inverse problem of identifying $f$ can then be formulated as
\begin{align}\label{eq:IP}
  T^\delta f = \rho^\delta.
\end{align}
We denote by $T$ the operator with $\rho^0$ used instead of $\rho^\delta$ in the right hand side of equations~\eqref{eq:rho_IP} and \eqref{eq:c_IP}. Then $Tf^0 =\rho^0$, so a solution for unperturbed data and operator exists.
Next, let us summarize some basic properties of the forward operator.
\begin{lemma}
  For any $\delta \ge 0$, the operator $T^\delta:H^1(0,1)\to L^2(0,T;L^2(\Omega))$ is affine linear, bounded, and compact.
\end{lemma}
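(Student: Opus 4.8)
The plan is to prove the three claimed properties in turn: affine linearity, boundedness, and compactness. The key observation is that the map $f \mapsto c^\delta$ is completely independent of $f$, since the elliptic equation \eqref{eq:c_IP} for $c^\delta$ depends only on the \emph{fixed} data $\rho^\delta$ through $g(\rho^\delta)$; thus $c^\delta$ (and $\nabla c^\delta$) is a fixed function, and by Lemma~\ref{lem:ex_lin_ell} together with $g \in W^{1,\infty}(\RR)$ and $p>2$ we have $\nabla c^\delta \in L^\infty(0,T;L^\infty(\Omega))^2$. Consequently $T^\delta f = r^\delta = r^\delta_0 + S(f)$, where $r^\delta_0$ solves the parabolic problem with zero right-hand side and initial datum $\rho_0$, and $S(f)$ solves \eqref{eq:rho_IP} with right-hand side $-\div(f(\rho^\delta)\nabla c^\delta)$ and zero initial datum.

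For affine linearity I would argue that $f \mapsto f(\rho^\delta)\nabla c^\delta$ is linear in $f$ as a map into $L^2(0,T;L^2(\Omega))^2$: indeed $(\alpha f_1 + \beta f_2)(\rho^\delta) = \alpha f_1(\rho^\delta) + \beta f_2(\rho^\delta)$ pointwise, and since $f \in H^1(0,1) \hookrightarrow C([0,1])$ with $\rho^\delta$ measurable and $0 \le \rho^\delta \le 1$ (by Lemma~\ref{lem:invariant}, noting $\rho^\delta$ need only take values in $[0,1]$ for the composition to be defined — alternatively one extends $f$ by continuity), the composition $f(\rho^\delta)$ is a well-defined element of $L^\infty(0,T;L^\infty(\Omega))$ with $\|f(\rho^\delta)\|_{L^\infty} \le \|f\|_{C[0,1]} \le C\|f\|_{H^1(0,1)}$. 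Linearity of the solution operator of the linear parabolic problem \eqref{eq:rho_lin}-type (Lemma~\ref{lem:ex_lin_par_l2}) then shows $S$ is linear, hence $T^\delta$ is affine linear. Boundedness follows by chaining the estimates: $\|T^\delta f\|_{L^2(0,T;L^2(\Omega))} \le \|r^\delta_0\|_{L^2(0,T;L^2(\Omega))} + \|f(\rho^\delta)\nabla c^\delta\|_{L^2(0,T;L^2(\Omega))} \le C + \sqrt{T|\Omega|}\,\|\nabla c^\delta\|_{L^\infty(0,T;L^\infty(\Omega))}\,\|f\|_{C[0,1]}$, and $\|f\|_{C[0,1]} \lesssim \|f\|_{H^1(0,1)}$ by Sobolev embedding in one dimension.

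For compactness, the natural route is to exploit the gain of regularity of the linear solution operator together with a compact embedding. By Lemma~\ref{lem:ex_lin_par_l2} the solution $r^\delta$ of \eqref{eq:rho_IP} actually lies in $L^2(0,T;W^{1,2}(\Omega)) \cap W^{1,2}(0,T;W^{1,2}(\Omega)')$ — this is recorded in the definition of $T^\delta$ — with a norm bounded by $C\|f\|_{H^1(0,1)}$. By the Aubin--Lions lemma, this space embeds compactly into $L^2(0,T;L^2(\Omega))$. Hence $S$, being bounded from $H^1(0,1)$ into $L^2(0,T;W^{1,2}(\Omega)) \cap W^{1,2}(0,T;W^{1,2}(\Omega)')$ and then composed with the compact embedding, is a compact linear operator; since $T^\delta$ differs from $S$ only by the constant shift $r^\delta_0$, it is compact as well. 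I expect the main obstacle to be the careful bookkeeping for the compactness step — specifically verifying that the linear parabolic estimate of Lemma~\ref{lem:ex_lin_par_l2} genuinely delivers the $W^{1,2}(0,T;W^{1,2}(\Omega)')$ bound on $\partial_t r^\delta$ with right-hand side in divergence form $-\div(f(\rho^\delta)\nabla c^\delta)$ (so that Aubin--Lions applies), and confirming the $H^1(0,1) \hookrightarrow C([0,1])$ embedding is all that is needed to control $f(\rho^\delta)$; the affine linearity and boundedness are essentially immediate once the independence of $c^\delta$ from $f$ is noted.
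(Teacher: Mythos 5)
Your proof is correct and follows essentially the same route as the paper, whose entire argument is to note that affine linearity is clear and that compactness (hence boundedness) follows from the Aubin--Lions lemma; you have simply filled in the details — the independence of $c^\delta$ from $f$, the embedding $H^1(0,1)\hookrightarrow C([0,1])$ to control $f(\rho^\delta)$, and the bound in $L^2(0,T;W^{1,2}(\Omega))\cap W^{1,2}(0,T;W^{1,2}(\Omega)')$ feeding into the compact embedding. The only point worth keeping in mind (which the paper also glosses over) is that the composition $f(\rho^\delta)$ requires $\rho^\delta$ to take values in $[0,1]$ or an extension of $f$, as you note parenthetically.
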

\begin{proof}
 Affine linearity is clear, and compactness of $T^\delta$, and hence boundedness, is a direct consequence of the Aubin-Lions lemma \cite{AubinLions}.
\end{proof}
% \begin{corollary}
% The problem \eqref{eq:IP} is ill-posed, i.e. there exists a sequence $\{f_n\}\subset H^1(0,1)$ such that $\|T^\delta f_n\|_{L^2(0,T;L^2(\Omega))}\to 0$ and $\|f_n\|_{H^1(0,1)}\to \infty$ as $n\to \infty.$
% \end{corollary}
%
%
As a direct consequence of the compactness of $T$ and $T^\delta$, the inverse problem is ill-posed, and some sort of regularization is required.

\subsection{Regularization}

In the following, we consider Tikhonov regularization for a stable solution of the perturbed inverse problem \eqref{eq:IP}. For $\alpha>0$, we define regularized approximations via the minimization problem
\begin{align}\label{eq:Tikhonov}
J_\alpha^\delta(f) = 
  \frac{1}{2} \| T^\delta f - \rho^\delta\|_{L^2(0,T;L^2(\Omega))}^2 + \frac{\alpha}{2} \|f\|_{H^1(0,1)}^2 \to \min_{f\in H^1(0,1)}!
\end{align}
From standard regularization theory \cite[Chapter 5]{EHN96}, we know that \eqref{eq:Tikhonov} has a unique minimizer $f_\alpha^\delta$ for any $\alpha>0$. 
To show convergence of $f_\alpha^\delta$ towards the solution, we also need an estimate for the perturbation in the operator.
% 
% Recalling that we have given $\rho^\delta$ satisfying \eqref{eq:noise}, we obtain in view of the a-priori estimates of Lemma~\ref{lem:ex_lin_par_l2} and \ref{lem:ex_lin_ell} applied to \eqref{eq:rho_IP} and \eqref{eq:c_IP}
With the same arguments as in the proof of Theorem~\ref{thm:ex_ell} one can see that 
\begin{align} \label{eq:perturbation}
  \|T^\delta f - &T f\|_{L^2(0,T;L^2(\Omega))} \leq C \delta \|f\|_{W^{1,\infty}(0,1)} \quad\text{for any $f \in W^{1,\infty}(0,1)$.}
 \end{align}
It is also possible to bound the perturbation error by $C \delta^{1/2} \|f\|_{H^{1}(0,1)}$ for all functions $f \in H^1(0,1)$.
Using the results of \cite[Chapter 5]{EHN96}, one can show that 
$f_\alpha^\delta$ converges to the minimum-norm solution $f^\dag$ of the unperturbed problem $T f^0 = \rho^0$, i.e., to the solution 
of minimal $H^1$-norm. Note that for the unperturbed problem such a solution always exists. 
For convenience of the reader, let us state the basic convergence result explicitly.
\begin{lemma} \label{lem:tik}
Let $T : X \to Y$ be a bounded linear operator between Hilbert spaces $X$ and $Y$, and $\rho \in R(T)$.
For $\delta>0$ let $\|\rho-\rho^\delta\| \le \delta$ and let $T^\delta : X \to Y$ be bounded linear operators with $\|Tf^\dag - T^\delta f^\dag\|_{Y} \le C(f^\dag) \delta$.
Then the regularized solutions $f_\alpha^\delta$ 
defined by \eqref{eq:Tikhonov} converge to the minimum-norm solution $f^\dag$ of $T f = \rho$ with $\delta \to 0$,
provided that $\alpha \to 0$ and $\delta^2/\alpha \to 0$.
\end{lemma}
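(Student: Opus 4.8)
The plan is to apply the classical convergence theorem for Tikhonov regularization with perturbed forward operator, essentially following \cite[Chapter 5]{EHN96}, but keeping careful track of the fact that the operator noise $\|Tf^\dag - T^\delta f^\dag\|_Y \le C(f^\dag)\delta$ enters the estimates. First I would record the variational inequality characterizing the minimizer: since $f_\alpha^\delta$ minimizes $J_\alpha^\delta$, we have $\tfrac12\|T^\delta f_\alpha^\delta - \rho^\delta\|_Y^2 + \tfrac\alpha2\|f_\alpha^\delta\|_X^2 \le \tfrac12\|T^\delta f^\dag - \rho^\delta\|_Y^2 + \tfrac\alpha2\|f^\dag\|_X^2$. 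Estimating the right-hand side using $\|T^\delta f^\dag - \rho^\delta\|_Y \le \|T^\delta f^\dag - Tf^\dag\|_Y + \|\rho - \rho^\delta\|_Y \le (C(f^\dag)+1)\delta$, I would obtain the two a priori bounds $\|T^\delta f_\alpha^\delta - \rho^\delta\|_Y^2 \le (C(f^\dag)+1)^2\delta^2 + \alpha\|f^\dag\|_X^2$ and, crucially, $\|f_\alpha^\delta\|_X^2 \le \|f^\dag\|_X^2 + (C(f^\dag)+1)^2\delta^2/\alpha$. Under the parameter choice $\delta^2/\alpha \to 0$, the second bound shows that the family $(f_\alpha^\delta)$ stays bounded in $X$, and $\limsup \|f_\alpha^\delta\|_X \le \|f^\dag\|_X$.

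Next I would extract a weakly convergent subsequence $f_{\alpha_k}^{\delta_k} \rightharpoonup \tilde f$ in $X$ (Hilbert space, bounded sequence). To identify the weak limit as a solution of $Tf = \rho$: from the first a priori bound, $\|T^\delta f_\alpha^\delta - \rho^\delta\|_Y \to 0$ as $\delta \to 0$, $\alpha \to 0$; combining this with $\|\rho^\delta - \rho\|_Y \le \delta \to 0$ and the operator-noise estimate to replace $T^\delta$ by $T$ on the relevant sequence — more precisely, writing $Tf_{\alpha_k}^{\delta_k} - \rho = (T - T^{\delta_k})f_{\alpha_k}^{\delta_k} + (T^{\delta_k}f_{\alpha_k}^{\delta_k} - \rho^{\delta_k}) + (\rho^{\delta_k} - \rho)$, and bounding the first term by $C\delta_k\|f_{\alpha_k}^{\delta_k}\|_{W^{1,\infty}}$ via the estimate \eqref{eq:perturbation} together with the uniform $X$-bound — one gets $Tf_{\alpha_k}^{\delta_k} \to \rho$ strongly in $Y$. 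Since $T$ is bounded linear, hence weakly continuous, $Tf_{\alpha_k}^{\delta_k} \rightharpoonup T\tilde f$, so $T\tilde f = \rho$, i.e. $\tilde f$ is an admissible solution. By weak lower semicontinuity of the norm, $\|\tilde f\|_X \le \liminf \|f_{\alpha_k}^{\delta_k}\|_X \le \limsup \|f_{\alpha_k}^{\delta_k}\|_X \le \|f^\dag\|_X$; by uniqueness of the minimum-norm solution this forces $\tilde f = f^\dag$ and $\|f_{\alpha_k}^{\delta_k}\|_X \to \|f^\dag\|_X$. Weak convergence together with norm convergence in a Hilbert space yields strong convergence $f_{\alpha_k}^{\delta_k} \to f^\dag$ in $X$. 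A standard subsequence-of-subsequence argument then upgrades this to convergence of the whole family.

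The main obstacle is the interplay between the two notions of operator noise. The abstract hypothesis in the lemma only assumes $\|Tf^\dag - T^\delta f^\dag\|_Y \le C(f^\dag)\delta$ at the single point $f^\dag$, which suffices for the a priori estimate on the right-hand side of the variational inequality; but to pass to the limit in $Tf_{\alpha_k}^{\delta_k} \to \rho$ one seemingly needs control of $(T - T^\delta)$ along the whole minimizing sequence. Here one uses that $(f_{\alpha_k}^{\delta_k})$ is bounded in $X = H^1(0,1)$, hence — since $\Omega \subset \RR^2$ and by the one-dimensional Sobolev embedding $H^1(0,1) \hookrightarrow C([0,1]) \hookrightarrow W^{1,\infty}$ fails, so instead — one invokes the $\delta^{1/2}$-version of the perturbation estimate valid for all $f \in H^1(0,1)$, namely $\|(T-T^\delta)f\|_Y \le C\delta^{1/2}\|f\|_{H^1(0,1)}$ mentioned right after \eqref{eq:perturbation}, which gives $\|(T-T^{\delta_k})f_{\alpha_k}^{\delta_k}\|_Y \le C\delta_k^{1/2}\sup_k\|f_{\alpha_k}^{\delta_k}\|_X \to 0$. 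Everything else is routine Hilbert-space functional analysis; the one point to state carefully is that the existence of the minimum-norm solution $f^\dag$ (needed for the statement to make sense) follows because $R(T)$ contains $\rho$ and the set of solutions is a nonempty closed convex subset of $X$, on which the strictly convex norm functional attains a unique minimum.
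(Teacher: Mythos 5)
Your argument is correct in substance but takes a genuinely different route from the paper's. The paper works with the closed-form Tikhonov minimizer and the identity $((T^\delta)^*T^\delta+\alpha I)(f_\alpha^\delta-f^\dag)=(T^\delta)^*(\rho^\delta-\rho)+(T^\delta)^*(T^\delta f^\dag-Tf^\dag)-\alpha f^\dag$, then invokes the spectral bounds $\|((T^\delta)^*T^\delta+\alpha I)^{-1}(T^\delta)^*\|\le\alpha^{-1/2}$ and $\|\alpha((T^\delta)^*T^\delta+\alpha I)^{-1}f^\dag\|\to 0$; in that scheme the data and operator errors are only ever evaluated at the single element $f^\dag$, which is precisely what the lemma's hypothesis supplies. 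You instead run the variational/weak-compactness argument (minimizing property, a priori bounds, weak subsequential limit, identification of the limit as a solution, norm convergence upgrading weak to strong). Your route is more robust and generalizes beyond the linear Hilbert-space setting, but --- as you correctly flag --- it requires control of $T-T^\delta$ along the whole minimizing sequence rather than only at $f^\dag$, so as a proof of the lemma \emph{as stated} it needs the strengthened hypothesis $\|(T-T^\delta)f\|\le C\delta^{1/2}\|f\|_X$ that you import from the remark following \eqref{eq:perturbation}; within the paper's concrete setting this is available, so the conclusion stands. For fairness, the paper's own proof has a symmetric subtlety: the claim $\|\alpha((T^\delta)^*T^\delta+\alpha I)^{-1}f^\dag\|\to 0$ is standard for a fixed operator (it is convergence to the projection of $f^\dag$ onto the null space, which vanishes for the minimum-norm solution) but also needs some uniform operator convergence when the operator varies with $\delta$. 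One small slip in your write-up: in the second paragraph you first bound $(T-T^{\delta_k})f_{\alpha_k}^{\delta_k}$ by $C\delta_k\|f_{\alpha_k}^{\delta_k}\|_{W^{1,\infty}}$ via \eqref{eq:perturbation}, which is not available since the minimizers are only bounded in $H^1(0,1)$; you correct this yourself in the final paragraph, and only the $\delta^{1/2}$-version of the perturbation estimate should be used there.
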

\def\tT{\widetilde T}
\begin{proof}
To avoid double superscripts, let us write $\tT$ for $T^\delta$.
Using $T f^\dag=\rho$ one easily obtains 
\begin{align*}
(\tT^* \tT + \alpha I) (f_\alpha^\delta - f^\dag) 
&= \tT^* (\rho^\delta - \rho) + \tT^* (\tT f^\dag - T f^\dag) - \alpha f^\dag. 
\end{align*}
Applying the inverse of $\tT^* \tT + \alpha I$ and the triangle inequality yields 
\begin{align*}
\|f_\alpha^\delta - f^\dag\| 
\le \|(\tT^* \tT + \alpha I)^{-1} \tT^*\| (\|\rho^\delta - \rho\| + \|\tT f^\dag - T f^\dag\|) + \|\alpha (\tT^* \tT + \alpha I)^{-1} f^\dag\|. 
\end{align*}
By the usual spectral estimates, we get $\|(\tT^* \tT + \alpha I)^{-1} \tT^*\| \le \alpha^{-1/2}$ and $\|\alpha (\tT^* \tT + \alpha I)^{-1} f^\dag\| \to 0$ with $\alpha \to 0$. The assertion then directly follows from the assumptions and the conditions on $\alpha$ and $\delta$.
\end{proof}
From our uniqueness results we can deduce that $f^\dag = f^0$ on $[\rho_{min},\rho_{max}]$, where $f^0$ is the true solution.
On the remaining part of the interval $[0,1]$, the minimum norm solution solves $-\Delta f^\dag + f^\dag=0$. 
Hence, $f^\dag$ is in $W^{1,\infty}$ globally. In view of \eqref{eq:perturbation} Lemma~\ref{lem:tik} thus applies almost verbatim to our problem.
As can be seen from the proof, one can also obtain quantitative estimates in the usual manner. 
In our numerical examples, we utilize the discrepancy principle as a parameter choice rule, 
i.e. we choose the maximal $\alpha>0$ such that
\begin{align}\label{eq:discrepancy}
  \| T^\delta f_{\alpha}^\delta - \rho^\delta\|_{L^2(0,T;L^2(\Omega))}\leq \tau \delta
\end{align}
for some appropriate $\tau>1$. 
Assuming that the minimum-norm solution satisfies an appropriate source condition, we can expect that $\|f_\alpha^\delta-f^\dagger\|_{H^1(0,1)}=\mathcal{O}(\sqrt{\delta})$, which is what we observe in our numerical tests.

\section{Numerical Examples}

\subsection*{Setup}
To mimic a typical experiment in a petri dish, we choose $\Omega = B_1(0)\subset\RR^2$. 
For our numerical test, we set
\begin{align*}
 f^0(\rho) = \rho(1-\rho)\;\text{and}\;g(\rho)=\rho,
\end{align*}
which is a typical form of the parameters that can be found in the literature.
Furthermore, we define the initial datum by
\begin{align}\label{eq:rho0}
%  \rho_0(x) = 0.45\exp\Big(-5 \big( \frac{(10 x_1-3)^2}{100}+ \frac{9x_2^2}{4}\big)\Big).
 \rho_0(x) = 0.45\exp\Big( -\frac{(10 x_1-3)^2 + 225 x_2^2}{20}\Big);
\end{align}
see \@ Figure~\ref{fig:data} for an image. The true data $\rho^0$ are then computed by a 
standard numerical method as outlined below. 
To obtain, a physically reasonable evolution, we consider instead of \eqref{eq:rho_our}--\eqref{eq:c_our} 
the system
\begin{align*}
 \partial_t \rho -D_\rho \Delta \rho&= -\div (f(\rho)\nabla c)\qquad\text{in } \Omega\times (0,T),\\
  -D_c\Delta c + A_c c &=  g(\rho) \quad\text{in } \Omega \times (0,T),
\end{align*}
with constant diffusion and absorption parameters $D_\rho=0.05$, $D_c=0.1$, and $A_c=0.01$. 
Our analytical results are valid also for this system and, with a slight abuse of notation, we will just refer to \eqref{eq:rho_our}--\eqref{eq:c_our} and \eqref{eq:rho_IP}--\eqref{eq:c_IP} below.

\subsection*{Finite Element Discretization}\label{sec:num}
In order to compute approximate solutions for \eqref{eq:rho_our}--\eqref{eq:bc}, we use a Galerkin framework.
For the discretization of $f$ we take one-dimensional continuous piecewise linear finite elements with $1000$ degrees of freedom. For the spatial discretization of $\rho$ and $c$, we employ two-dimensional continuous piecewise linear finite elements on a triangulation of $\Omega$ with $4225$ vertices, and we use a linear implicit Euler scheme with step size $\Delta t=0.025$ and $T_{end}=5$ for the time integration. Let us refer to \cite{Braess,Thomee84} for details on finite element discretizations for elliptic and parabolic problems. 

\subsection*{Simulation of the data}
% In view of \eqref{eq:rho_lin} and \eqref{eq:c_lin}, we use the following semi-implicit discretization scheme: For every time step we first compute $c_h(t^{n+1})$ by solving the elliptic equation \eqref{eq:c_lin}. With this $c_h$, we compute $\rho_h^\dagger(t^{n+1})$ by solving the parabolic equation \eqref{eq:rho_lin} with $\tilde\rho = \rho_h^\dagger(t^n)$ and $c=c_h(t^{n+1})$.
% This results in $201$ temporal samples for $\rho_h^\dagger$.
% Let us mention that our numerical scheme conserves the total mass $\int_\Omega \rho_h^\dagger(x,t)\dd x$ over time.
Some snapshots $\rho_h^0(t)$ and $c_h^0(t)$ of the bacteria density and the concentration of the chemoattractant obtained with our simulation are depicted in Figure~\ref{fig:data}.
During the whole evolution, the range of the bacteria density $\rho_h^0$ is bounded by $\rhom^0= 7.3\times 10^{-6} \approx 0$ and $\rhoM^0= 1-1.7\times 10^{-6} \approx 1$; thus we expect that $f(\rho)$ can be identified on the whole interval $\rho \in (0,1)$, and $f^0=f^\dagger$.
%

% In view of \eqref{eq:rho_lin} and \eqref{eq:c_lin}, we use the following semi-implicit discretization scheme: For the initial datum $\rho_0$ defined in \eqref{eq:rho0} we use a pointwise interpolation denoted by $\rho_{0,h}\in Y_N$. The coefficient vectors of the iterates $(\rho_h^n,c_h^n)$ are computed as follows
% \begin{align*}
%  ( D_\rho \K + \M)\brho^{n+1} &= \M\brho^n + \Delta t \K(f(\rho^n))\bc^{n+1},\\
%  ( D_c \K + A_c \M) \mathbf{c}^{n+1} &= \mathbf{g}(\rho_h^n),
% \end{align*}
% with
% \begin{align*}
%  \K(f(\rho^n))_{i,j} = \int_\Omega f(\rho_h(x,t)) \nabla \phi_j(x) \nabla \phi_i(x)\dd x.
% \end{align*}
% Here we introduced scaling parameters $D_\rho=0.05$, $D_c=0.1$ and $A_c=0.01$, which ensure an increase of the maximal bacteria density; note that these constant parameters do not change the analysis of the previous sections. We apply this scheme with a time step size $\Delta t = 0.025$, which results in $201$ temporal samples of the bacteria density $\rho_h$. 

\begin{figure}[t!]
 \includegraphics[width=0.15\textwidth]{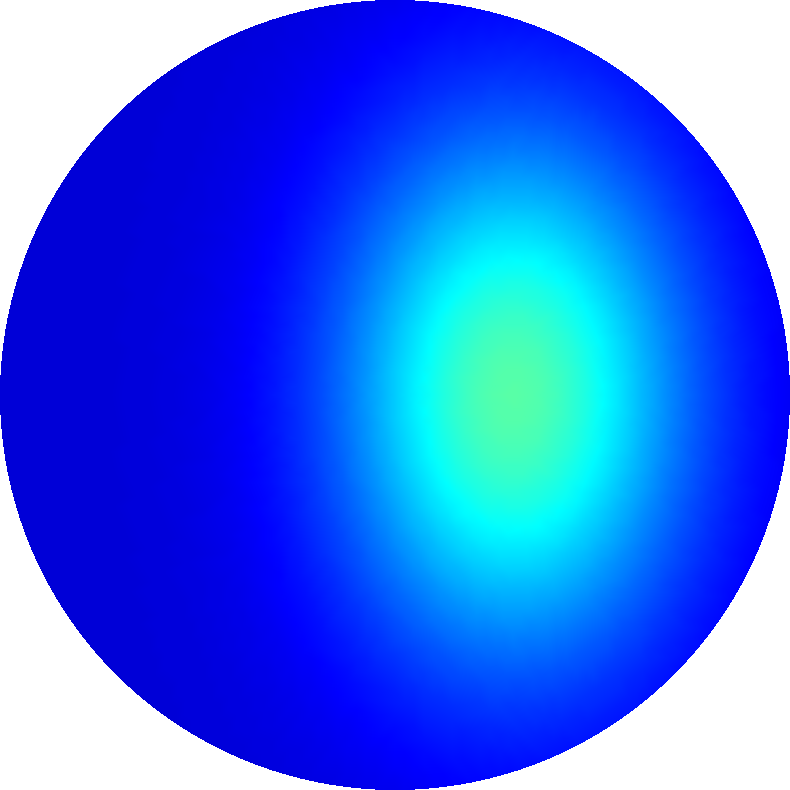}
 \includegraphics[width=0.15\textwidth]{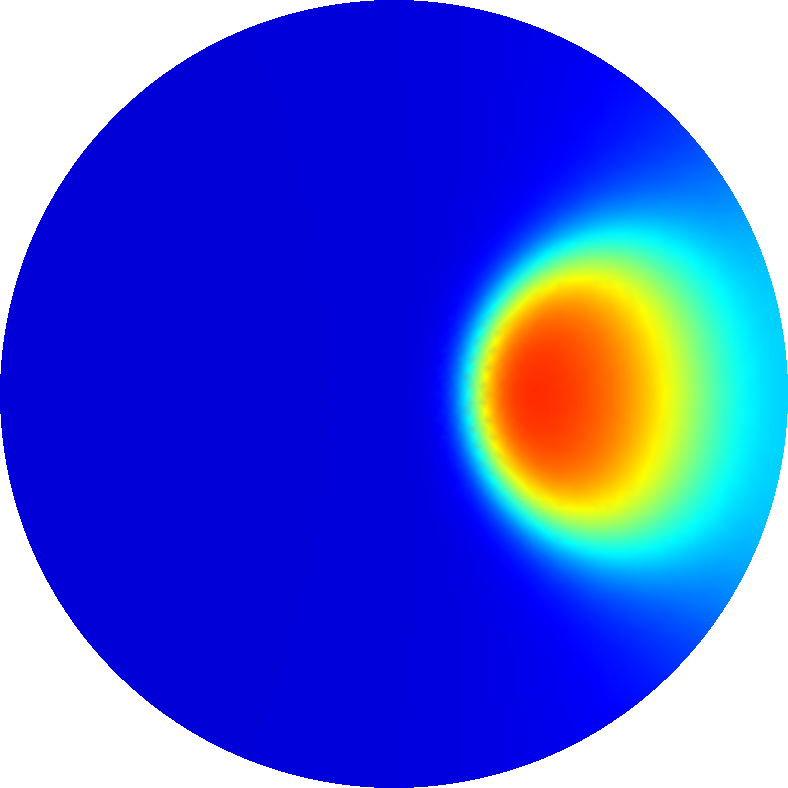}
 \includegraphics[width=0.15\textwidth]{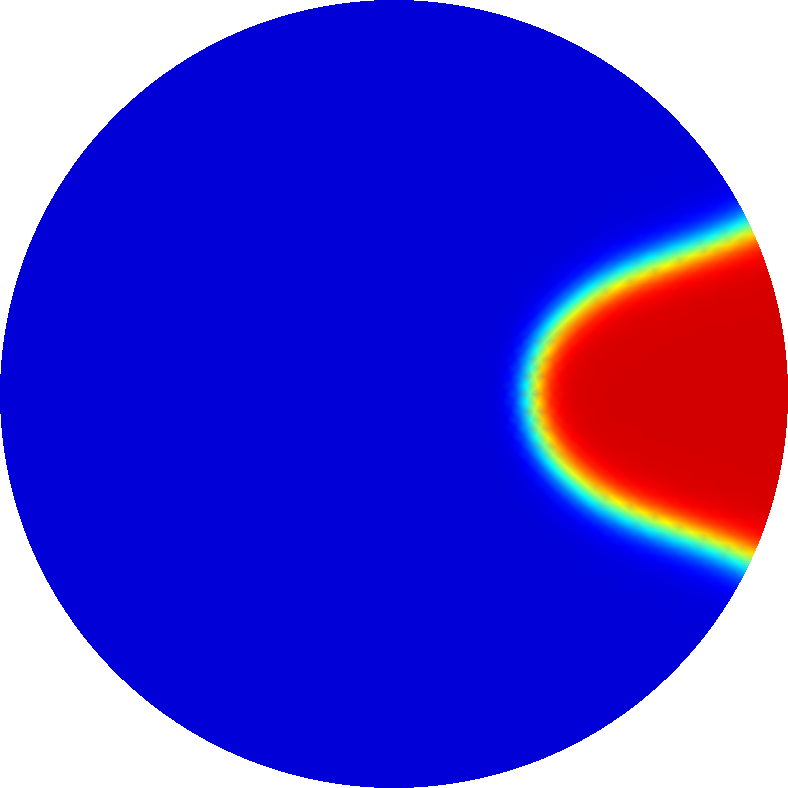}
 \includegraphics[width=0.15\textwidth]{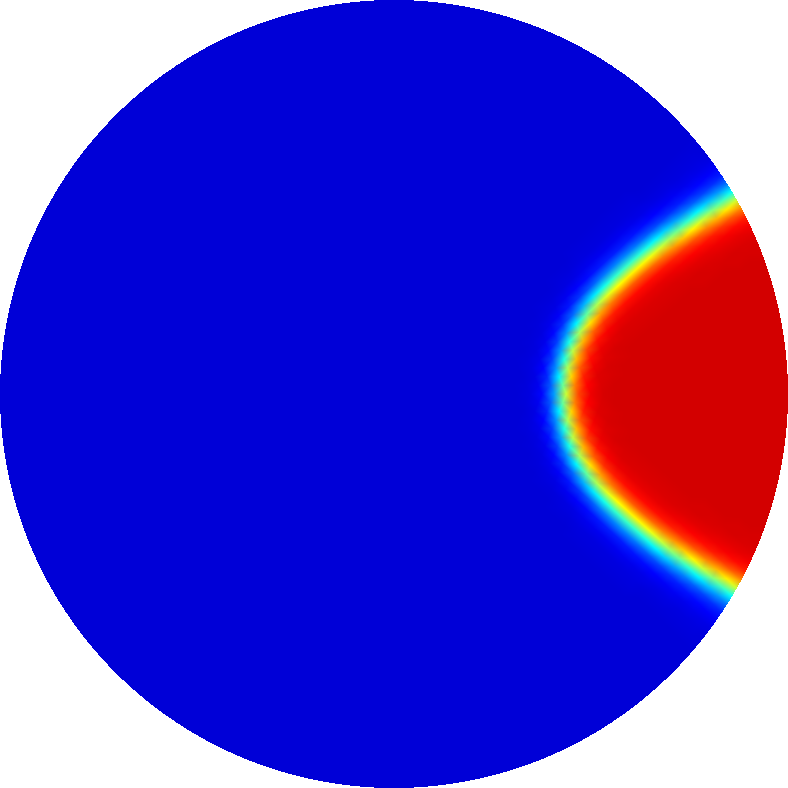}
 \includegraphics[width=0.15\textwidth]{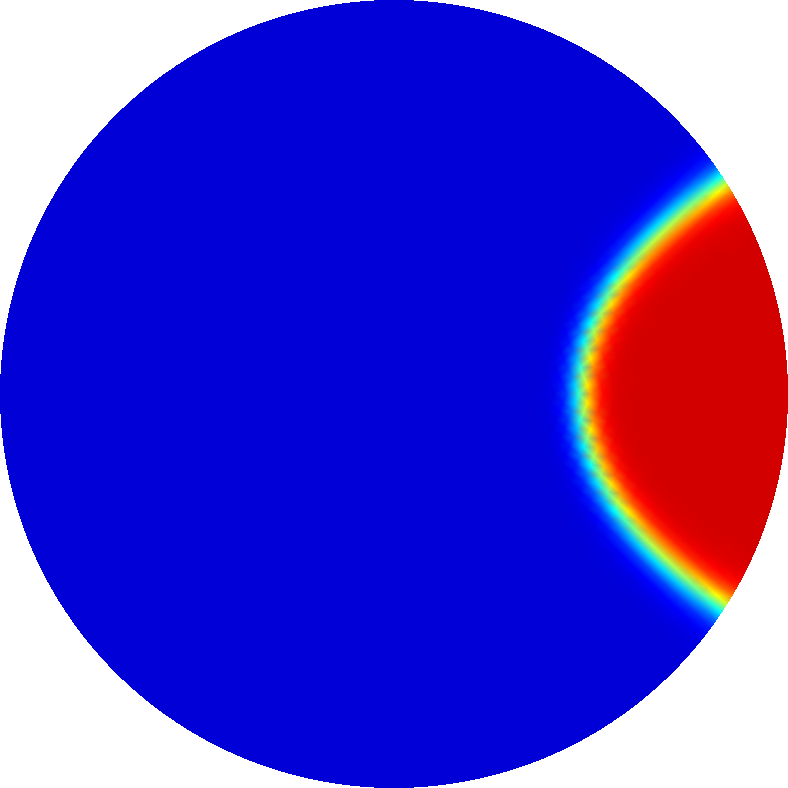}
 \includegraphics[width=0.15\textwidth]{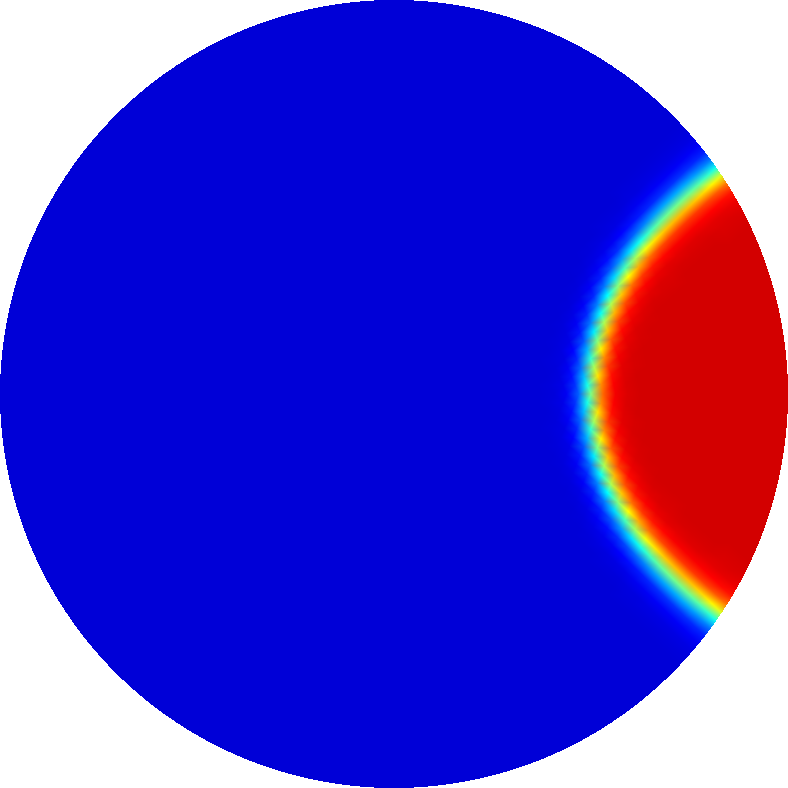}\\[0.2em]
 \includegraphics[width=0.15\textwidth]{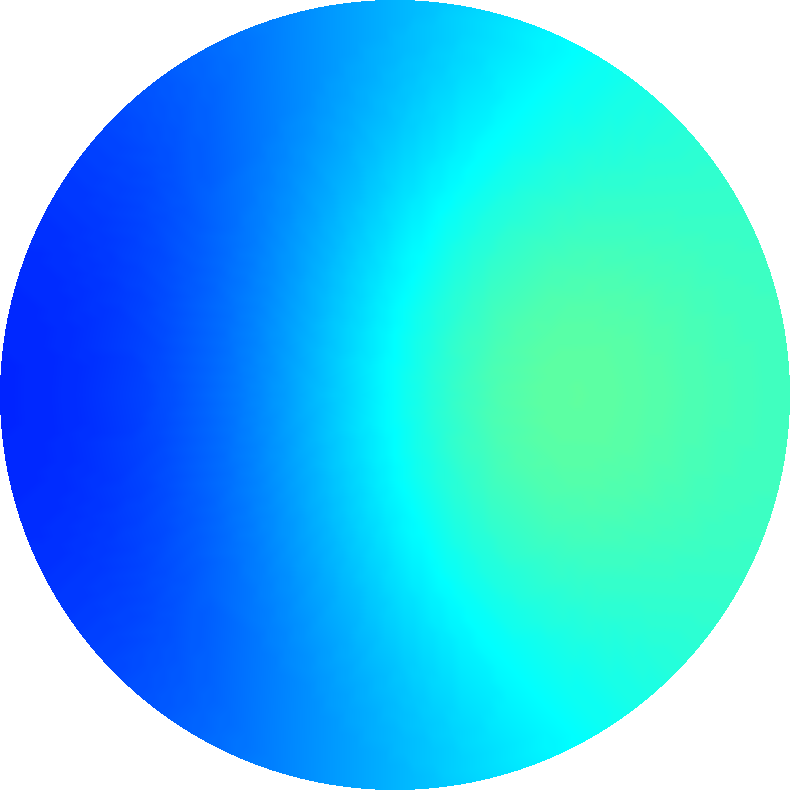}
 \includegraphics[width=0.15\textwidth]{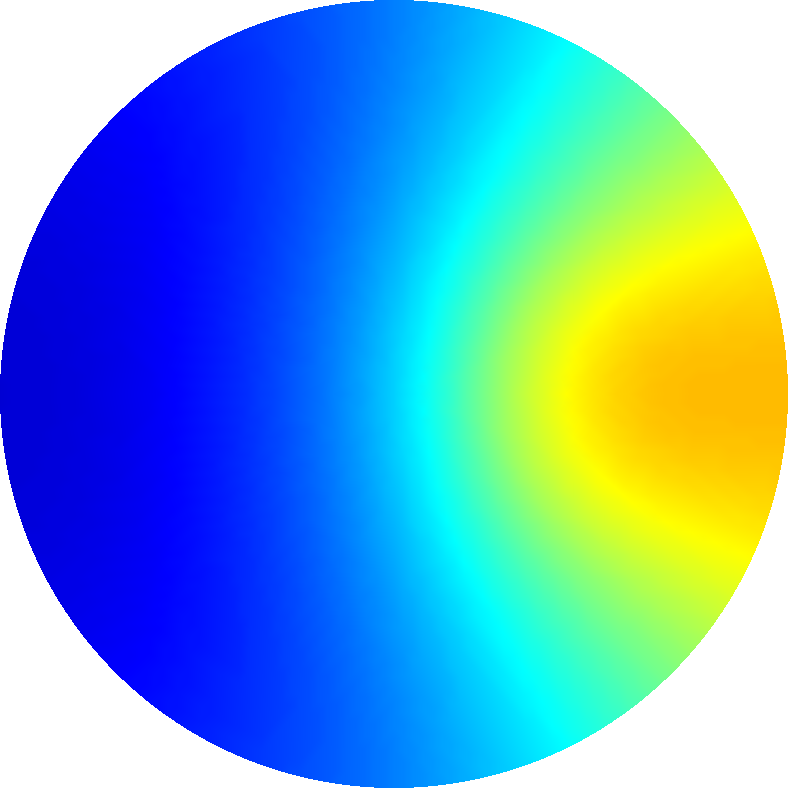}
 \includegraphics[width=0.15\textwidth]{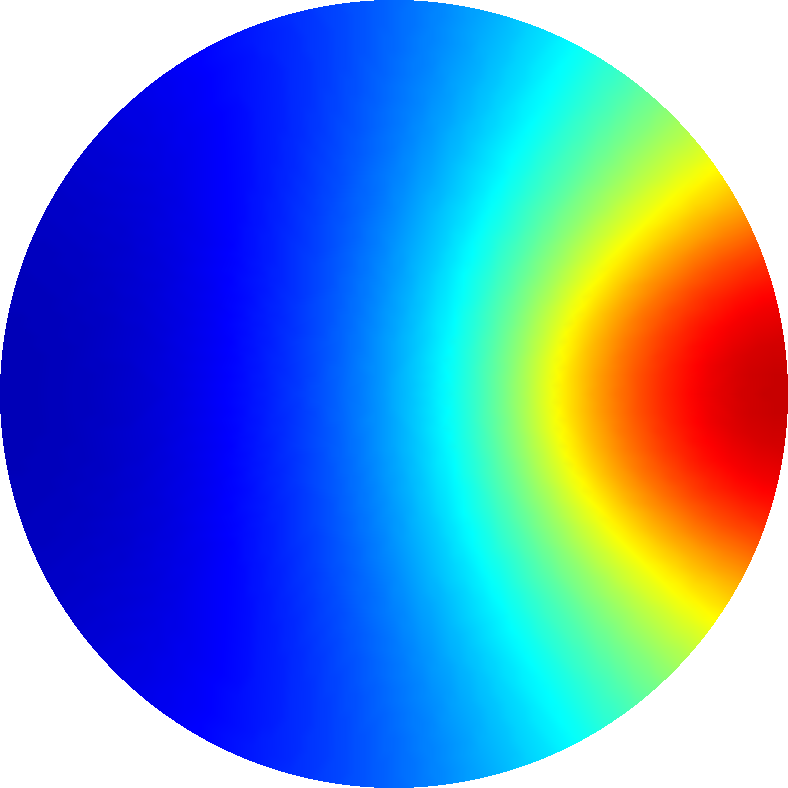}
 \includegraphics[width=0.15\textwidth]{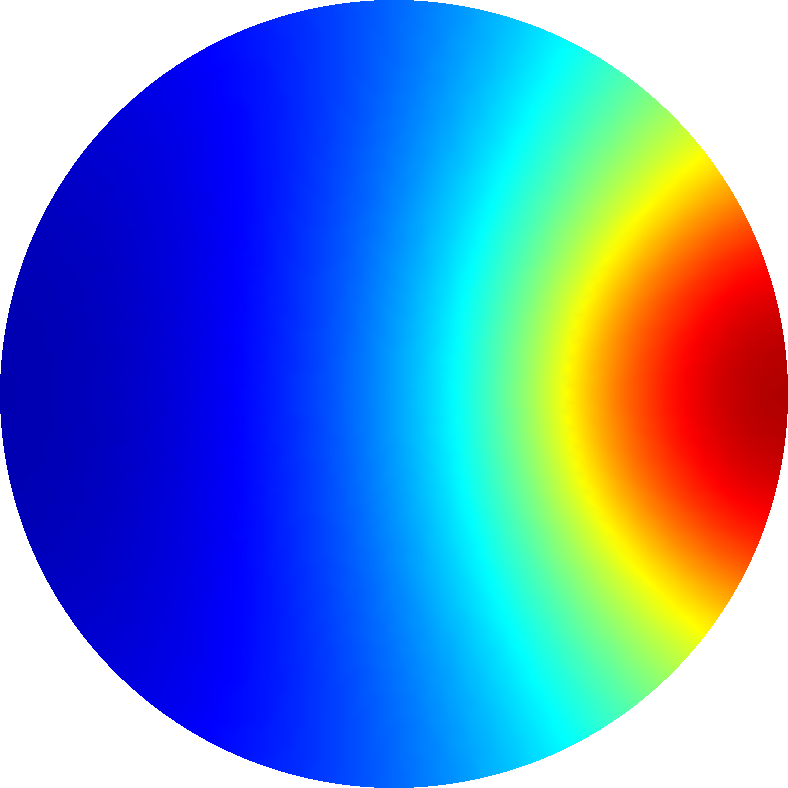}
 \includegraphics[width=0.15\textwidth]{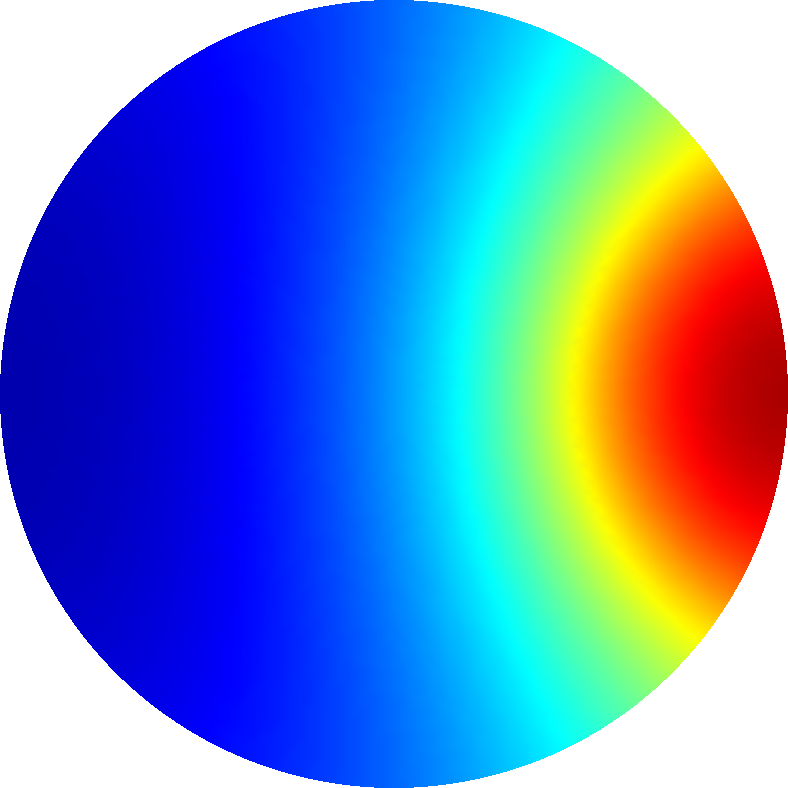}
 \includegraphics[width=0.15\textwidth]{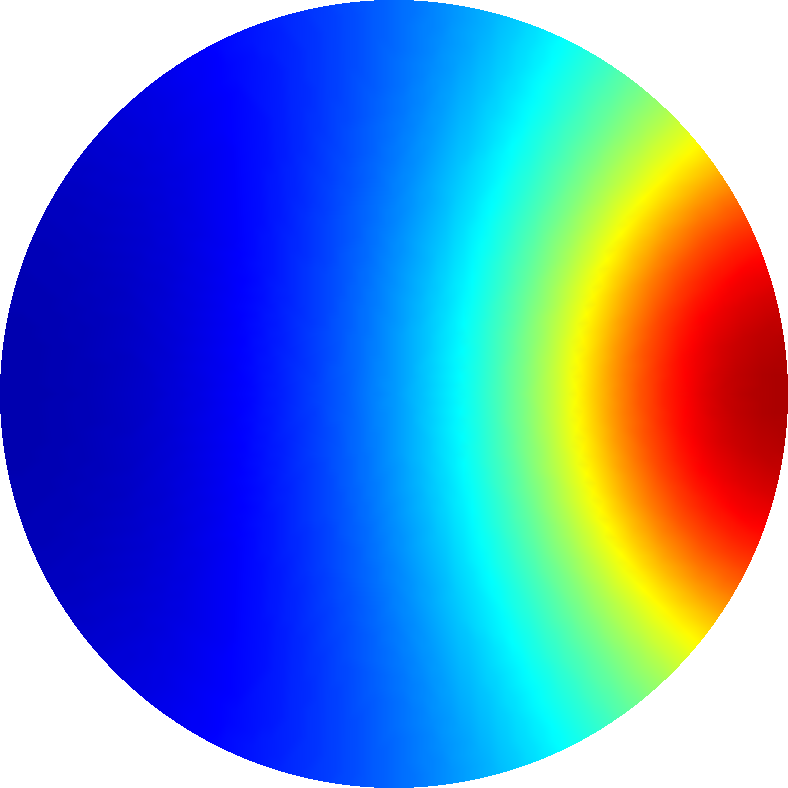}
\caption{Simulated data $\rho_h^0(t)$ (top) and corresponding concentrations $c_h^0(t)$ (bottom) for $t=0,1,\ldots,5$ from left to right.}
\label{fig:data}
\end{figure}

\subsection*{Setup of the inverse problem}

The computed data $\rho_h^0$ are perturbed by random noise such that
\begin{align*}
  \|\rho_h^0 - \rho_h^\delta\|_{L^2(0,T;L^2(\Omega))} = \delta.
\end{align*}
To obtain a discretization of the perturbed forward operator $T^\delta$, 
we proceed as follows: 
In each time step we compute $c_h(t^{n+1})$ by solving numerically the elliptic equation \eqref{eq:c_IP} with right-hand side $g(\rho_h^\delta(t^n))$. We then compute $r_h^\delta(t^{n+1})$ by solving the parabolic equation \eqref{eq:rho_IP} with right-hand side $-\nabla\cdot( f( \rho_h^\delta(t^n))\nabla c_h(t^{n+1}))$.
The discretization of the operator $T^\delta$ is then defined by the mapping $f_h \mapsto r_h^\delta$. 
The regularized approximation $f_{h,\alpha}^\delta$ is finally computed by minimizing the discrete counterpart of the Tikhonov functional $J_\alpha^\delta$ using the discrepancy principle with $\tau =1.03$ as a stopping rule. 

\subsection*{Reconstructions}
In Figure~\ref{fig:noise}, we depict the reconstructions $f_{h,\alpha}^\delta$ that were obtained for $\delta\in\{0.05,0.5\}$.
Note that we obtain rather good reconstructions already for very large noise levels, which can be explained by the fact that the inverse problem is highly overdetermined. The good quality of the reconstructions indicates that the proposed method could actually be useful in practice.

\begin{figure}[ht!]
 \includegraphics[width=0.45\textwidth]{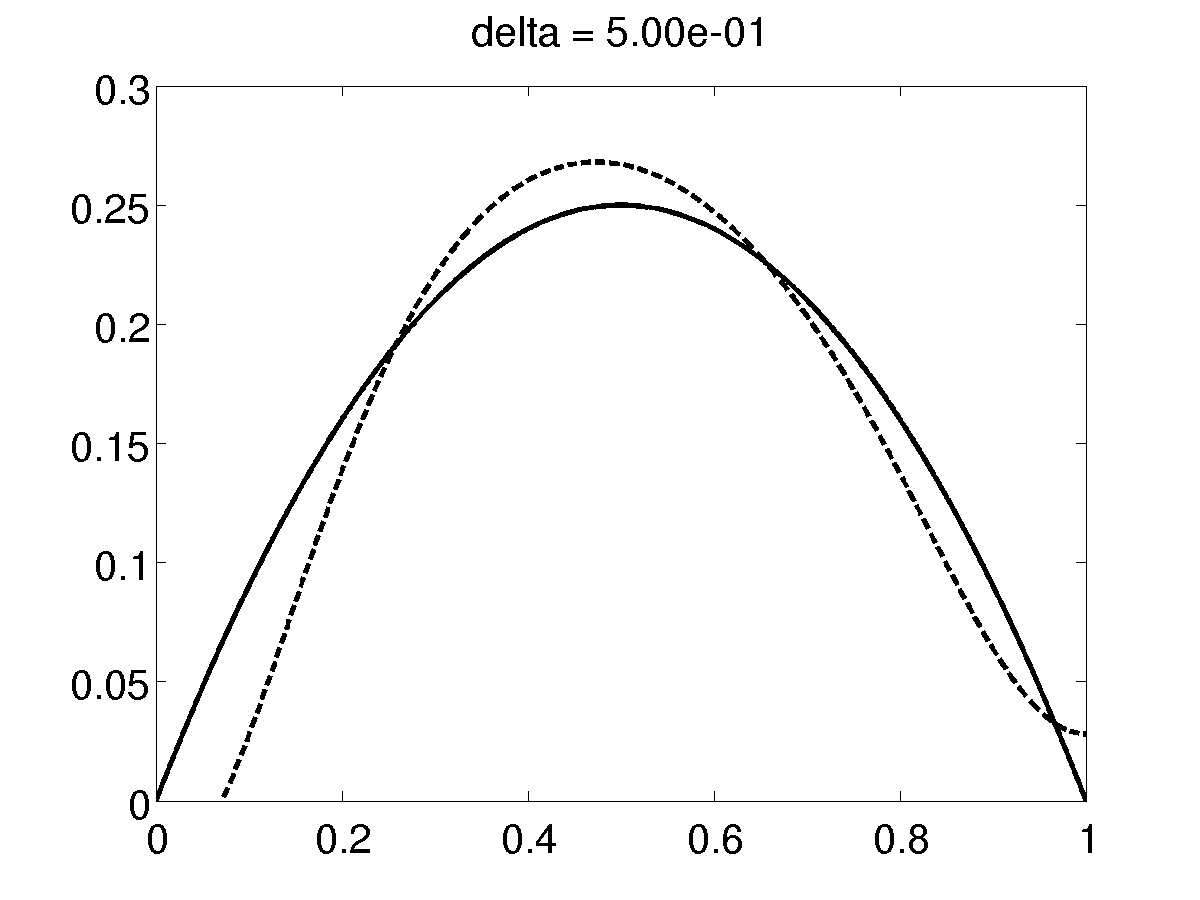}
 \hspace*{1em}
 \includegraphics[width=0.45\textwidth]{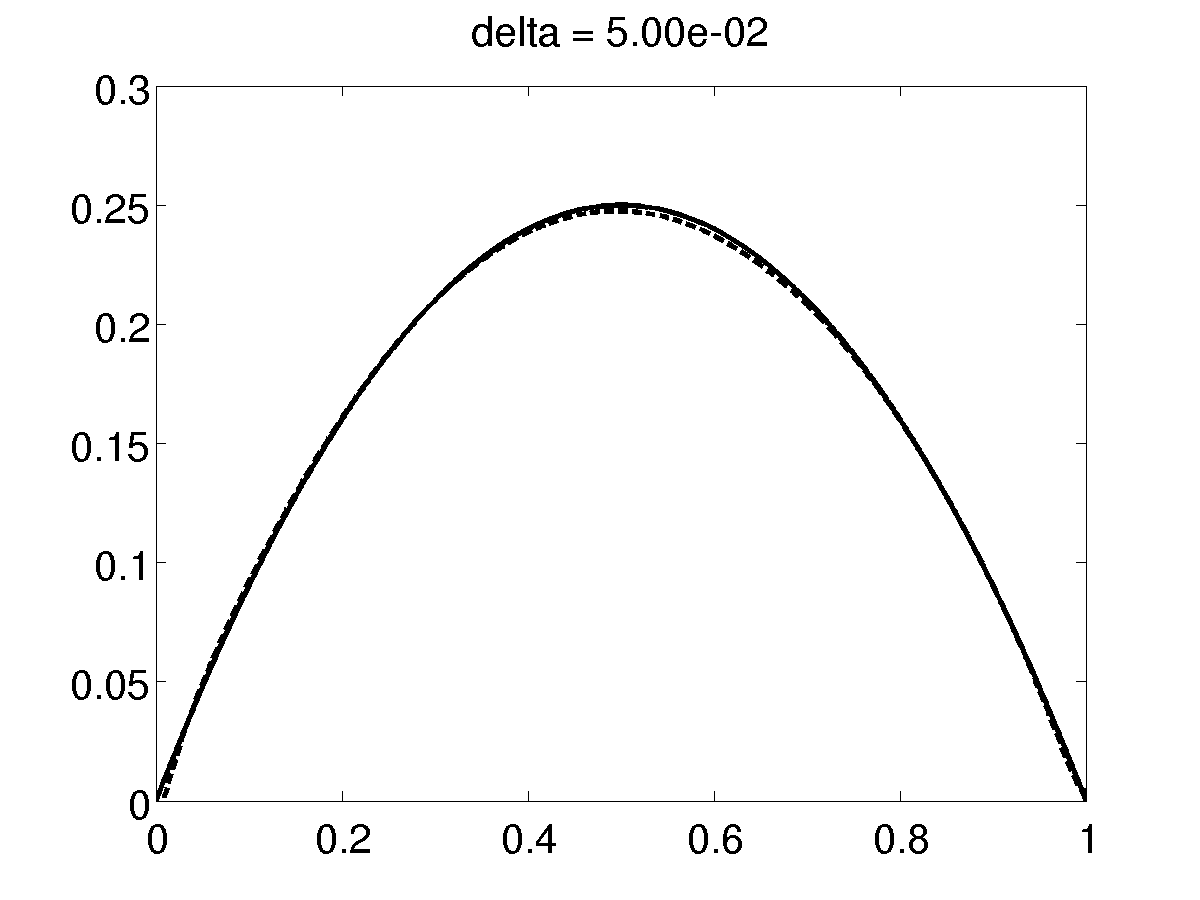}
\caption{Sensitivity $f^0(\rho)=\rho(1-\rho)$ (solid) and reconstruction $f_{h,\alpha}^\delta$ (dotted) for noise levels $\delta=0.5$ (left) and $\delta=0.05$ (right).\label{fig:noise}}
\end{figure}

In Figure~\ref{fig:rates}, we display the regularization parameters $\alpha$ chosen by the discrepancy principle, and the reconstruction errors $\|f^0-f_{h,\alpha}^\delta\|_{H^1(0,1)}$ obtained in our tests. 
As predicted by theory, when assuming that a source condition is valid, 
we observe $\alpha \approx \delta$ and $\|f^0-f_{h,\alpha}^\delta\|_{H^1(0,1)} \approx \sqrt{\delta}$ which is the best one can expect for Tikhonov regularization stopped by the discrepancy principle \cite[Chapter 5]{EHN96}.
% This presupposes that we have an noise level $\delta$ which also includes operator perturbations. 
% In our numerical example the functions $f_{h,\alpha}^\delta$ are in $W^{1,\infty}(0,1)$, and hence estimate \eqref{eq:op_perturbation} improves to
% \begin{align*}
%   \|T^\delta f - T f\|_{L^2(0,T;L^2(\Omega))} \leq C \delta \quad\text{for all } f\in W^{1,\infty}(0,1),
% \end{align*}
% which yields the desired noise level $\delta$. Using this noise level and assuming a source condition, we can explain also the rates for the error as shown in Figure~\ref{fig:rates}.

%
\begin{figure}[ht!]
 \includegraphics[width=0.38\textwidth]{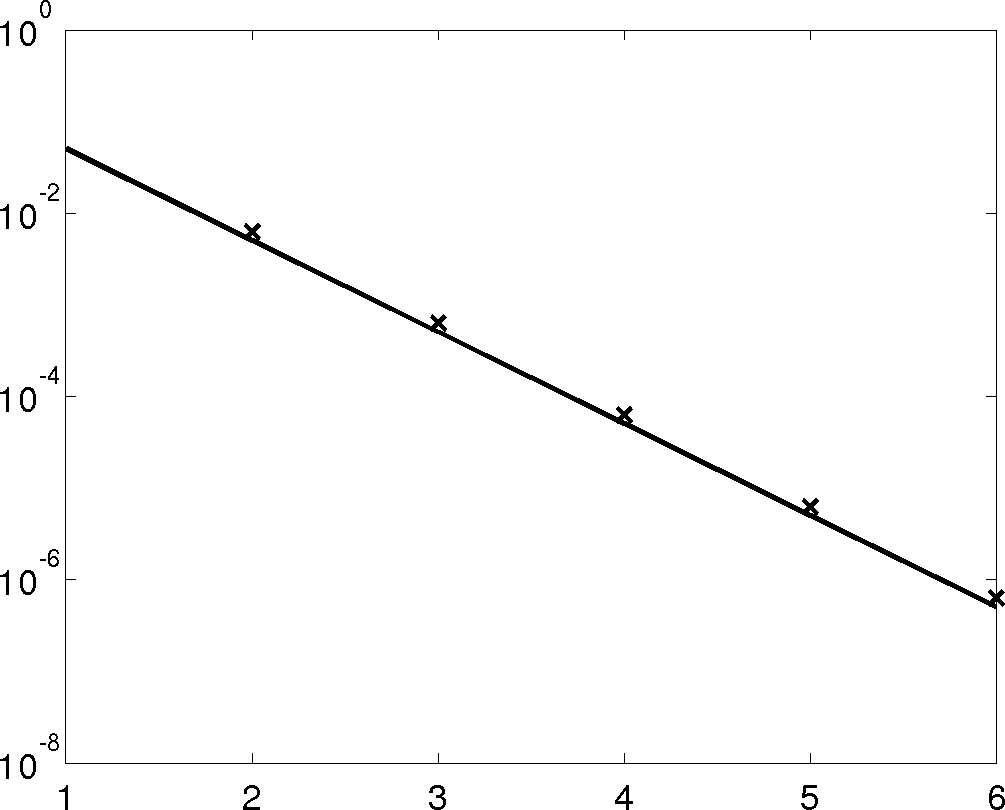}
 \hspace*{3em}
 \includegraphics[width=0.38\textwidth]{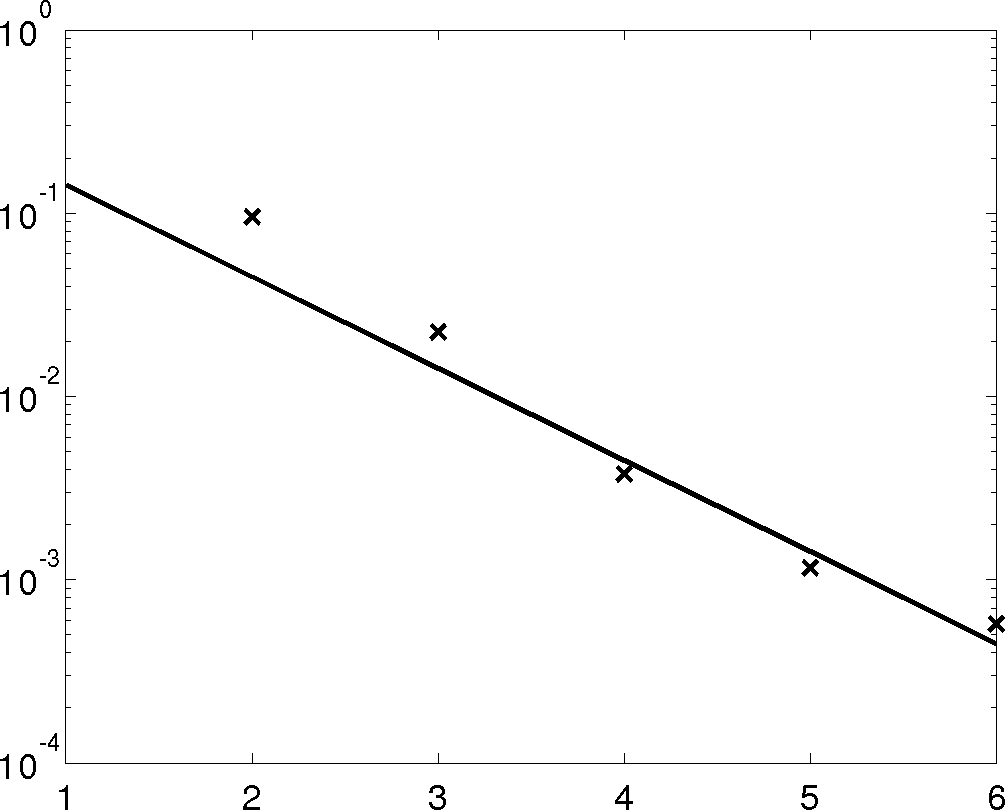}
\caption{
Regularization parameters $\alpha$ picked by the discrepancy principle for $\delta = 5\times 10^{-i}$, $i=1,\ldots,6$ (left) and reconstruction errors $\|f^0-f_{h,\alpha}^\delta\|_{H^1(0,1)}$ (right). 
The numerical results (dotted) are compared with the theoretical rates $\delta$ and $\sqrt{\delta}$, respectively. \label{fig:rates}}
\end{figure}

\section{Conclusion \& Open Problems}
In this work, we investigated the identification of the parameter functions $f(\rho)$ and $g(\rho)$ in a non-linear chemotaxis model with volume-filling. We presented uniqueness results for the identification of either parameter when the other is known from distributed measurements of the bacteria density alone. We also proposed a numerical method for actually computing the unknown functions, and illustrated its performance by numerical tests.

Let us mention some further topics of possible research concerning inverse problems in chemotaxis that could not be addressed here: 
From the theoretical point of view, the simultaneous identification of both functions $f(\rho)$ and $g(\rho)$ remains an open problem. 
A related question is, how much data is really needed to identify $f(\rho)$ and $g(\rho)$. It seems natural to conjecture that it is possible to reconstruct both functions on the range of values attained in the data, no matter how much data is available. 
Besides uniqueness, also the questions of stability of the reconstruction should be addressed. 
%The problem is overdetermined already in two space dimensions, the dimension of the data is $2$ (spatial dimension) + $1$ (time dimension). The question is to which extend that fact contributes to a stable reconstruction, in particular in the presence of noise.
%
% \item Stemming from the observation in section \ref{sec:recon_noreg} that stable reconstructions are possible even without the additional Tikhonov regularization, it would be worthwhile to investigate if ``regularization by discretization'', see \cite{Natterer1977,EHN96} is already sufficient to stabilize the inverse problem.
%
Our numerical results suggest that it might be possible to obtain convergence rates. It remains to verify that the chemotactic sensitivity $f$ in fact satisfies the required source condition and to interpret this condition.
Apart from the volume-filling model considered in this work, other chemotaxis models have been proposed, which also have a non-linear diffusion term, e.g. of porous medium type; see \cite{Calvez2006,Efendiev2011,Laurencot2005}. Starting from the existence theory, which is different from what we presented here, it would be interesting to see which of our results of Section~\ref{sec:inv} can be lifted to this case.
Finally, it would be interesting to see how far our results can be used to learn about real biological systems like E. coli bacteria \cite{Rajitha2009}.

\section{Acknowledgements}
HE acknowledges support by DFG via Grant IRTG 1529 and GSC 233. The work of JFP was supported by DFG via Grant 1073/1-1, by the Daimler and Benz Stiftung via Post-Doc Stipend 32-09/12 and by the German Academic Exchange Service via PPP grant no. 56052884.

\section*{Appendix}
\renewcommand{\thetheorem}{A.\arabic{theorem}}
\renewcommand{\theequation}{A.\arabic{equation}}
\setcounter{theorem}{0}
\setcounter{equation}{0}
This section summarizes some results from the linear theory of parabolic and elliptic boundary value problems that are needed in the fixed-point argument of Theorem \ref{thm:ex_ell} and elsewhere in the manuscript.

\begin{lemma}\label{lem:ex_lin_par_l2}
  For $h\in L^2(0,T;L^2(\Omega))^2$ and $u_0\in L^{2}(\Omega)$ the Neumann problem
  \begin{align}\label{eq:lin_par_l2}
    \partial_t u -\Delta u &= -\div(h)\quad\text{in }  \Omega \times (0,T),\\% L^2(0,T; W^{1,2}(\Omega)')\\
    \partial_n u &= 0\quad\text{in } \partial\Omega\times(0,T),\label{eq:lin_par_bc_l2}\\
    u(0)&=u_0\quad\text{in } \Omega,
  \end{align}
  has a unique weak solution $u\in L^2(0,T; W^{1,2}(\Omega))\cap W^{1,2}(0,T;W^{1,2}(\Omega)')$, which satisfies
    \begin{align}\label{eq:lin_par_apriori_l2}
    \| u\|_{L^\infty(0,T;L^2(\Omega))}\leq \|h\|_{L^2(0,T;L^2(\Omega))} +  \|u_0\|_{L^2(\Omega)}.
  \end{align} 
  Here, the divergence has to be understood in a distributional sense, i.e.
  \begin{align*}
    \langle -{\rm div}(h), \phi \rangle := \int_0^T \int_\Omega h(x,t)\cdot \nabla\phi(x,t)\dd x \dd t \quad \text{for } \phi \in L^2(0,T;W^{1,2}(\Omega)).
  \end{align*}
\end{lemma}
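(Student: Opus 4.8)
The plan is to treat this as a standard linear parabolic problem in the Gelfand triple $V := W^{1,2}(\Omega) \hookrightarrow H := L^2(\Omega) \hookrightarrow V'$. First I would fix the weak formulation: find $u \in L^2(0,T;V) \cap W^{1,2}(0,T;V')$ with $u(0) = u_0$ such that
\[
  \langle \partial_t u(t), \phi \rangle + \int_\Omega \nabla u(t) \cdot \nabla \phi \dd x = \int_\Omega h(t) \cdot \nabla \phi \dd x
\]
for all $\phi \in V$ and almost every $t \in (0,T)$; this is precisely the identity obtained from the distributional reading of $-\div(h)$ given in the statement. The points to check here are that $\phi \mapsto \int_\Omega h(t)\cdot\nabla\phi\dd x$ is a bounded functional on $V$ with norm at most $\|h(t)\|_{L^2(\Omega)}$, so that $-\div(h) \in L^2(0,T;V')$, and that the bilinear form $a(v,\phi) = \int_\Omega \nabla v \cdot \nabla\phi\dd x$, although not coercive on $V$ because of the Neumann boundary condition, satisfies the G\aa rding inequality $a(v,v) + \|v\|_H^2 = \|v\|_V^2$.

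With these ingredients, existence and uniqueness of $u$ in $W(0,T) := \{v \in L^2(0,T;V) : \partial_t v \in L^2(0,T;V')\}$ follow from the classical theorem on abstract parabolic evolution equations (Lions' theorem; see e.g.\ Lions--Magenes, Showalter, or Wloka), the G\aa rding inequality being absorbed by the standard substitution $v = e^{-t} u$. Since $W(0,T) \hookrightarrow C([0,T];H)$ continuously, the initial value $u(0) = u_0 \in H$ is meaningful and attained. As an alternative one could instead run a Faedo--Galerkin scheme in the eigenbasis of the Neumann Laplacian, derive the a-priori bound below uniformly in the Galerkin dimension, and pass to the limit via the Aubin--Lions lemma; I would mention this only in passing.

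For the quantitative estimate I would test the equation with $\phi = u(t)$, which is admissible by the energy identity valid in $W(0,T)$, obtaining
\[
  \frac12 \frac{d}{dt}\|u(t)\|_{L^2(\Omega)}^2 + \|\nabla u(t)\|_{L^2(\Omega)}^2 = \int_\Omega h(t)\cdot\nabla u(t)\dd x \le \frac12\|h(t)\|_{L^2(\Omega)}^2 + \frac12\|\nabla u(t)\|_{L^2(\Omega)}^2
\]
by Cauchy--Schwarz and Young's inequality. Absorbing the gradient term on the left, integrating over $(0,t)$ (no Gronwall argument is needed here), and discarding the remaining nonnegative term $\int_0^t\|\nabla u\|_{L^2(\Omega)}^2$, one gets $\|u(t)\|_{L^2(\Omega)}^2 \le \|u_0\|_{L^2(\Omega)}^2 + \|h\|_{L^2(0,T;L^2(\Omega))}^2$, and hence $\|u(t)\|_{L^2(\Omega)} \le \|u_0\|_{L^2(\Omega)} + \|h\|_{L^2(0,T;L^2(\Omega))}$ uniformly in $t$, which is the bound \eqref{eq:lin_par_apriori_l2}. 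Uniqueness is then immediate: the difference of two solutions solves the problem with $h = 0$ and $u_0 = 0$, so the estimate forces it to vanish.

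The main obstacle is really only bookkeeping: one must handle the non-coercivity of the Neumann bilinear form through a G\aa rding inequality, correctly realize the divergence-form source in $L^2(0,T;V')$, and justify testing with $u$ via the embedding $W(0,T)\hookrightarrow C([0,T];H)$ and the associated energy identity. Since none of these points is deep, I would keep the argument short and defer to the cited references for the abstract parabolic existence theory.
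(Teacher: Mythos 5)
Your proposal is correct and follows essentially the same route as the paper: existence and uniqueness by the standard abstract parabolic theory (the paper simply cites Evans), and the a-priori bound by testing with $u$, integrating in time, and applying Young's inequality to absorb the gradient term. The additional detail you give on the Gelfand triple, the G\aa rding inequality, and the realization of $-\div(h)$ in $L^2(0,T;W^{1,2}(\Omega)')$ is a faithful elaboration of what the paper leaves implicit.
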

\begin{proof}
The existence and uniqueness follows with standard arguments; see e.g. \cite{Evans98}. Multiplying \eqref{eq:lin_par_l2} with the solution $u$ and integrating over $\Omega \times (0,t)$ gives 
\begin{align*}
\tfrac{1}{2}\|u(t)\|_{L^2(\Omega)}^2 + \int_0^t \|\nabla u(s)\|_{L^2(\Omega)}^2 \dd s &= \tfrac{1}{2}\|u_0\|_{L^2(\Omega)}^2 + \int_0^t (h,\nabla u(s))_\Omega \dd s.
\end{align*}
The assertion then follows by an application of Young's inequality.
\end{proof}

\begin{lemma}\label{lem:ex_lin_par_lp}
 For $h\in L^q(0,T;L^q(\Omega))$, $b\in L^\infty(0,T;L^\infty(\Omega))^2$ and $u_0\in W^{2-2/q,q}(\Omega)$ with $2<q<3$, the Neumann problem
  \begin{align}\label{eq:lin_par_lp}
    \partial_t u -\Delta u + b\cdot \nabla \rho&= h\quad\text{in } \Omega \times (0,T),\\\label{eq:lin_par_bc_lp}
    \partial_n u &= 0\quad\text{in } \partial\Omega\times(0,T),\\
  u(0)&=u_0\quad\text{in } \Omega,
  \end{align}
  has a unique solution $u\in L^q(0,T;W^{2,q}(\Omega))\cap W^{1,q}(0,T;L^q(\Omega))$ which satisfies
  \begin{align*} %\label{eq:lin_par_apriori_lp}
    \|u\|_{L^q(0,T;W^{2,q}(\Omega))} + \|\partial_t u\|_{L^q(0,T;L^q(\Omega))} %\\\nonumber
    &\le C(\|h\|_{L^q(0,T;L^q(\Omega))} + \|u_0\|_{W^{2-2/q,q}(\Omega)}).
  \end{align*}
  In particular, we deduce from Sobolev embeddings that $u \in C^0(\overline{\Omega}\times [0,T])$.
%   where the constant $C$ remains bounded as $T\to 0$.
  \end{lemma}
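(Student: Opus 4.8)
The plan is to deduce the result from the maximal $L^q$-regularity of the heat equation with homogeneous Neumann boundary conditions, treating the first-order term $b\cdot\nabla u$ as a lower-order perturbation.

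First I would recall the baseline statement for the case $b\equiv 0$: for $h\in L^q(0,T;L^q(\Omega))$ and $u_0\in W^{2-2/q,q}(\Omega)$ the problem $\partial_t u-\Delta u=h$, $\partial_n u=0$ on $\partial\Omega\times(0,T)$, $u(0)=u_0$, possesses a unique solution in $L^q(0,T;W^{2,q}(\Omega))\cap W^{1,q}(0,T;L^q(\Omega))$ together with the claimed a priori estimate. This is classical parabolic maximal regularity; see e.g. Ladyzhenskaya--Solonnikov--Ural'tseva or Denk--Hieber--Pr\"uss. Here assumption (A1) on the smoothness of $\partial\Omega$ and the bound $q<3$ are used: since $2-2/q<4/3<3/2$, the Neumann trace of $u_0$ need not be prescribed, so no compatibility conditions between $u_0$ and the boundary operator arise.

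To incorporate $b\cdot\nabla u$ with $b\in L^\infty(0,T;L^\infty(\Omega))^2$, I would set up a fixed-point argument. Given $v$ in the maximal-regularity space on a subinterval $[0,T_0]$, let $\mathcal{S}v$ be the solution of $\partial_t u-\Delta u=h-b\cdot\nabla v$ with the same boundary and initial data. Using the case $b\equiv 0$, the interpolation inequality $\|\nabla v\|_{L^q(0,T_0;L^q(\Omega))}\le \epsilon\|v\|_{L^q(0,T_0;W^{2,q}(\Omega))}+C_\epsilon\|v\|_{L^q(0,T_0;L^q(\Omega))}$, and the bound $\|v\|_{L^q(0,T_0;L^q(\Omega))}\le T_0^{1/q}\|v\|_{L^\infty(0,T_0;L^q(\Omega))}$ together with the embedding of the solution space into $C([0,T_0];L^q(\Omega))$, one checks that $\mathcal{S}$ is a contraction on a ball of the maximal-regularity space once $\epsilon$ and then $T_0$ are chosen small enough, with $T_0$ depending only on $\|b\|_{L^\infty(0,T;L^\infty(\Omega))}$. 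Since the relevant estimates are affine in the data, the length $T_0$ does not deteriorate as the iteration proceeds, so the local solution extends to all of $[0,T]$ by finitely many continuation steps; uniqueness follows from linearity and the a priori estimate, and the global estimate follows either by summing the subinterval estimates or, once existence is known, by absorbing the drift term and applying Gronwall's lemma on $[0,T]$.

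Finally, for the embedding statement I would invoke the trace result $L^q(0,T;W^{2,q}(\Omega))\cap W^{1,q}(0,T;L^q(\Omega))\hookrightarrow C([0,T];W^{2-2/q,q}(\Omega))$ followed by the Sobolev embedding $W^{2-2/q,q}(\Omega)\hookrightarrow C^0(\overline\Omega)$, which holds since $2-2/q-2/q>0$ for $q>2$ in two space dimensions; composing the two gives $u\in C^0(\overline\Omega\times[0,T])$. I expect the only genuinely delicate point to be the handling of the first-order term: one must ensure that the perturbation constant depends on $\|b\|_{L^\infty}$ but not on $T$ in a way that obstructs global continuation, which is precisely why the short-time contraction followed by iteration, rather than a single global fixed point, is the safe route.
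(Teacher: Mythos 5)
Your argument is correct, but it does more work than the paper, which offers no proof at all and simply refers to Ladyzhenskaya--Solonnikov--Ural'tseva, where the maximal $L^q$-regularity statement is available directly for parabolic operators \emph{including} bounded lower-order coefficients $b\cdot\nabla u$ (Theorem IV.9.1 in that reference, in the $W^{2,1}_q$ setting). Your route instead takes only the drift-free Neumann heat equation as the black box and recovers the full statement by treating $b\cdot\nabla u$ as a perturbation: the Ehrling-type interpolation $\|\nabla v\|_{L^q}\le\eps\|v\|_{W^{2,q}}+C_\eps\|v\|_{L^q}$, a short-time contraction in the maximal-regularity space, and continuation using the trace embedding into $C([0,T];W^{2-2/q,q}(\Omega))$ to restart with controlled initial data. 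This is a legitimate and arguably more self-contained derivation, at the cost of the bookkeeping needed to verify that the contraction time depends only on $\|b\|_{L^\infty}$ and that the subinterval estimates sum to a global bound (with a constant growing like $C^{T/T_0}$, which is harmless for fixed $T$). Your identification of where the hypotheses enter is accurate: $q<3$ gives $2-2/q<1+1/q$ so that no compatibility condition $\partial_n u_0=0$ is imposed (the comparison with $4/3$ is the relevant one; the extra ``$<3/2$'' is not needed), and $q>2$ gives $2-2/q-2/q>0$ for the Sobolev embedding $W^{2-2/q,q}(\Omega)\hookrightarrow C^0(\overline\Omega)$ in two dimensions, yielding the continuity claim.
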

For the proof, let us refer to \cite{Ladyshenskaja68}.
\begin{lemma}\label{lem:ex_lin_ell}
Let $h\in L^\infty(0,T;L^q(\Omega))$ for some $2\leq q <\infty$. Then there exists a unique $u\in L^\infty(0,T;W^{2,q}(\Omega))$ satisfying the Neumann problem
  \begin{align}\label{eq:lin_ell}
  -\Delta u + u&= h\quad\text{in } \Omega,\,\text{ for a.e. }t\in[0,T]\\\label{eq:lin_ell_bc}
    \partial_n u &= 0\quad\text{on } \partial\Omega,\,\text{ for a.e. }t\in[0,T].
  \end{align}
Moreover, the following a-priori estimates hold
\begin{align}
 \|u\|_{L^\infty(0,T;W^{2,q}(\Omega))} &\le C\|h\|_{L^\infty(0,T;L^q(\Omega))}, \label{eq:apriori_c}\\
 \|u\|_{L^r(0,T;W^{1,2}(\Omega))} &\le \|h\|_{L^r(0,T;L^2(\Omega))},  \label{eq:apriori_c2}
\end{align}
where $C$ only depends on $\Omega$ and $q$, and $1\leq r\leq \infty$ is arbitrary.
\end{lemma}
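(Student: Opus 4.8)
The plan is to reduce the parabolic-looking statement to a family of stationary elliptic problems, one for each $t$, and then assemble the Bochner-space bounds. First I would fix $t\in[0,T]$. Since $q\geq 2$ and $\Omega$ is bounded, $h(t)\in L^q(\Omega)\subset L^2(\Omega)$, so $v\mapsto (h(t),v)_\Omega$ is a bounded linear functional on $W^{1,2}(\Omega)$. The bilinear form $a(u,v)=\int_\Omega (\nabla u\cdot\nabla v + uv)\dd x$ is bounded and coercive on $W^{1,2}(\Omega)$, so the Lax--Milgram lemma provides a unique weak solution $u(t)\in W^{1,2}(\Omega)$ with $a(u(t),v)=(h(t),v)_\Omega$ for all $v\in W^{1,2}(\Omega)$; coercivity also gives uniqueness.

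Next I would test this weak formulation with $v=u(t)$ and use the Cauchy--Schwarz inequality: $\|u(t)\|_{W^{1,2}(\Omega)}^2 = a(u(t),u(t)) = (h(t),u(t))_\Omega \leq \|h(t)\|_{L^2(\Omega)}\|u(t)\|_{W^{1,2}(\Omega)}$, hence the pointwise bound $\|u(t)\|_{W^{1,2}(\Omega)}\leq \|h(t)\|_{L^2(\Omega)}$. Raising this to the power $r$ and integrating over $(0,T)$ (or taking the essential supremum when $r=\infty$) yields \eqref{eq:apriori_c2}. I would point out that the constant is exactly $1$ precisely because the zeroth-order term of $-\Delta+1$ coincides with the zeroth-order part of the coercivity form $a$.

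For the interior/boundary $W^{2,q}$ regularity I would invoke the classical elliptic theory for the Neumann problem on domains with $C^{1,1}$ boundary --- this is exactly where assumption (A1) is needed. Since $-\Delta+1$ is uniformly elliptic with trivial kernel, the Agmon--Douglis--Nirenberg / Calder\'on--Zygmund estimates (see, e.g., the monographs of Gilbarg and Trudinger, or of Grisvard for the Neumann case) give $u(t)\in W^{2,q}(\Omega)$ together with $\|u(t)\|_{W^{2,q}(\Omega)}\leq C\|h(t)\|_{L^q(\Omega)}$, with $C$ depending only on $\Omega$ and $q$. Taking the essential supremum over $t$ then gives \eqref{eq:apriori_c}. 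To conclude that $u$ is genuinely an element of $L^\infty(0,T;W^{2,q}(\Omega))$ I would note that $t\mapsto u(t)$ is strongly measurable as a $W^{2,q}(\Omega)$-valued map, being the composition of the strongly measurable map $t\mapsto h(t)\in L^q(\Omega)$ with the bounded linear solution operator $L^q(\Omega)\to W^{2,q}(\Omega)$; combined with the uniform bound \eqref{eq:apriori_c} this finishes the argument.

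The only genuinely non-elementary ingredient is the $W^{2,q}$ elliptic regularity estimate, which relies on the $C^{1,1}$-smoothness of $\partial\Omega$ and is the main thing to quote carefully from the literature; everything else is the Lax--Milgram lemma and a one-line energy estimate, so I expect no further difficulty beyond bookkeeping.
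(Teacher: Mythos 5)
Your proposal is correct and follows essentially the same route as the paper: existence and the $W^{2,q}$ bound come from classical elliptic regularity for the Neumann problem on a $C^{1,1}$ domain (the paper cites Grisvard and invokes the bounded inverse theorem for the constant, which amounts to the same a-priori estimate you quote), and the constant-one estimate \eqref{eq:apriori_c2} is obtained exactly as you do, by testing with $u(t)$ as in Lemma~\ref{lem:ex_lin_par_l2}. Your explicit remark on strong measurability of $t\mapsto u(t)$ is a detail the paper leaves implicit, but it does not change the argument.
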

\begin{proof}
Existence of a unique solution in $W^{2,q}(\Omega)$ for a.e. $t\in [0,T]$ follows from standard arguments in the theory of linear elliptic equations, see e.g. \cite[Thm. 2.4.2.7]{Grisvard85}. The a-priori estimate \eqref{eq:apriori_c} follows by the bounded inverse theorem and by taking the supremum over $t$. Estimate \eqref{eq:apriori_c2} follows in a similar fashion as the a-priori estimate in Lemma~\ref{lem:ex_lin_par_l2}.
\end{proof}

\bibliographystyle{abbrv}
\bibliography{bib}

\end{document}